\numberwithin{equation}{section}
\newtheorem{thm}{Theorem}[section]
\newtheorem{example}[thm]{Example}
\newtheorem{question}[thm]{Question}
\newtheorem{cor}[thm]{Corollary}
\newtheorem{pr}[thm]{Proposition}
\theoremstyle{definition}
\theoremstyle{definition}
\newcommand{\Sn}{S^{n-1}}
\newcommand{\Rn}{\mathbb{R}^{n}}
\newcommand{\R}{\mathbb{R}}
\newcommand{\N}{\mathbb{N}}
\newcommand {\grtrsim} {\ {\raise-.5ex\hbox{$\buildrel>\over\sim$}}\ }
\newcommand{\khii}{\text{\lower -.4ex\hbox{$\chi$}}}
\DeclareMathOperator{\spt}{spt}
\DeclareMathOperator{\Int}{Int}
\newcommand{\spa}{\operatorname{span}}
\begin{document}
\title [Intersections of projections and exceptional plane sections]{Hausdorff dimension, intersections of projections and exceptional plane sections}
\author{Pertti Mattila and Tuomas Orponen}

\thanks{Both authors were supported by the Academy of Finland, and TO in particular through the grant \emph{Restricted families of projections and connections to Kakeya type problems.}} \subjclass[2000]{Primary 28A75} \keywords{Hausdorff dimension, orthogonal projection, plane section}

\begin{abstract} This paper contains new results on two classical topics in fractal geometry: projections, and intersections with affine planes. To keep the notation of the abstract simple, we restrict the discussion to the planar cases of our theorems. 

Our first main result considers the orthogonal projections of two Borel sets $A,B \subset \R^{2}$ into one-dimensional subspaces. Under the assumptions $\dim A \leq 1 < \dim B$ and $\dim A + \dim B > 2$, we prove that the intersection of the projections $P_{L}(A)$ and $P_{L}(B)$ has dimension at least $\dim A - \epsilon$ for positively many lines $L$, and for any $\epsilon > 0$. This is quite sharp: given $s,t \in [0,2]$ with $s + t = 2$, we construct compact sets $A,B \subset \R^{2}$ with $\dim A = s$ and $\dim B = t$ such that almost all intersections $P_{L}(A) \cap P_{L}(B)$ are empty. In case both $\dim A > 1$ and $\dim B > 1$, we prove that the intersections $P_{L}(A) \cap P_{L}(B)$ have positive length for positively many $L$. 

If $A \subset \R^{2}$ is a Borel set with $0 < \mathcal{H}^{s}(A) < \infty$ for some $s > 1$, it is known that $A$ is 'visible' from almost all points $x \in \R^{2}$ in the sense that $A$ intersects a positive fraction of all lines passing through $x$. In fact, a result of Marstrand says that such non-empty intersections typically have dimension $s - 1$. Our second main result strengthens this by showing that the set of exceptional points $x \in \R^{2}$, for which Marstrand's assertion fails, has Hausdorff dimension at most one.

\end{abstract}

\maketitle

\section{Introduction} 

According to Marstrand's projection theorem, originating in \cite{Mar}, a Borel set $A \subset \R^{n}$ projects orthogonally onto a set of dimension $\min\{\dim A,m\}$ on almost all $m$-dimensional subspaces. Furthermore, if $\dim A > m$, then the projections have positive $m$-dimensional measure almost surely, and if $\dim A > 2m$, then almost all projections have non-empty interior. In this paper, we prove variants of these results for projections of two Borel sets $A,B \subset \R^{n}$. We denote the Grassmannian manifold of $m$-dimensional linear subspaces $V \subset \R^{n}$ by $G(n,m)$, and $P_{V} \colon \R^{n} \to V$ stands for the orthogonal projection onto $V$. For any pair of non-empty sets $A,B \subset \R^{n}$, it is clear that $P_{V}(A) \cap P_{V}(B) \neq \emptyset$ for certain $V \in G(n,m)$. We ask: under what conditions are there positively many such $V$ (with respect to the Haar measure $\gamma_{n,m}$ on $G(n,m)$)? And what can we say about the dimension, measure, or topology of the intersections $P_{V}(A) \cap P_{V}(B)$? Our first main result provides some answers:

\newpage

\begin{thm}\label{main1}
Let $A$ and $B$ be Borel subsets of $\Rn$.
\begin{itemize}
\item[(i)] If $\dim A > m$ and  $\dim B > m$, then 
$$\gamma_{n,m}\left(\{V\in G(n,m): \mathcal H^m(P_V(A)\cap P_V(B))>0\}\right)>0.$$
\item[(ii)] If $\dim A > 2m$ and  $\dim B > 2m$, then 
$$\gamma_{n,m}\left(\{V\in G(n,m): \Int(P_V(A)\cap P_V(B))\neq \emptyset\}\right)>0.$$
\item[(iii)] If $\dim A > m, \dim B \leq m$ and $\dim A + \dim B > 2m$, then for every $\epsilon > 0$,
$$\gamma_{n,m}\left(\{V\in G(n,m): \dim P_V(A)\cap P_V(B) > \dim B - \epsilon\}\right)>0.$$
\end{itemize}
\end{thm}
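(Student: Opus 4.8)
The plan is to reduce all three parts to one averaging identity for the projected measures and then, in case (iii), to upgrade it to an energy bound. I begin by fixing Frostman measures: since $\dim A>m$ there is a measure $\mu$ supported on a compact subset of $A$ with $I_a(\mu)<\infty$ for some $a>m$, and likewise a measure $\nu$ on a compact subset of $B$ with $I_b(\nu)<\infty$ for some $b$ as close to $\dim B$ as desired. Because $a>m$, the standard Fourier–energy identity $I_s(\lambda)=c\int_{\Rn}|\widehat\lambda(\xi)|^2|\xi|^{s-n}\,d\xi$ together with the Grassmannian integration formula $\int_{G(n,m)}\int_V h\,d\mathcal{H}^m\,d\gamma_{n,m}(V)=c\int_{\Rn}h(\xi)|\xi|^{m-n}\,d\xi$ shows that for $\gamma_{n,m}$-a.e.\ $V$ the push-forward $\mu_V:=P_{V\#}\mu$ is absolutely continuous with density $f_V\in L^2(V)$, and moreover that $\int_{G(n,m)}\int_V|\widehat{f_V}(\xi)|^2|\xi|^{\alpha}\,d\mathcal{H}^m(\xi)\,d\gamma_{n,m}(V)=c\,I_{m+\alpha}(\mu)<\infty$ for $0\le\alpha\le a-m$. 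Thus $f_V$ lies, for a.e.\ $V$, in the Sobolev space of smoothness $\tfrac12(a-m)$.

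The key identity comes from pairing $f_V$ with $\nu_V:=P_{V\#}\nu$ and averaging. Using $\widehat{f_V}=\widehat\mu$ and $\widehat{\nu_V}=\widehat\nu$ on $V$, Parseval on $V$, and the integration formula above,
\[
\int_{G(n,m)}\!\Big(\int_V f_V\,d\nu_V\Big)\,d\gamma_{n,m}(V)=c\!\int_{\Rn}\widehat\mu(\xi)\,\overline{\widehat\nu(\xi)}\,|\xi|^{m-n}\,d\xi=c'\,I_m(\mu,\nu),
\]
where $I_m(\mu,\nu)=\iint|x-y|^{-m}\,d\mu(x)\,d\nu(y)$. Since $a>m$, the potential $U^m_\mu(y)=\int|x-y|^{-m}\,d\mu(x)$ is bounded, whence $I_m(\mu,\nu)\in(0,\infty)$; in particular the left-hand integrand is positive on a set of $V$ of positive measure. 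This yields (i): when $\dim B>m$ we may take $b>m$, so $\nu_V=g_V\,dx$ is also absolutely continuous, the integrand equals $\int_V f_Vg_V\,dx$, and on the positive-measure set where this is positive the set $\{f_Vg_V>0\}\subset P_V(A)\cap P_V(B)$ has positive $\mathcal{H}^m$-measure. For (ii), taking $a,b>2m$ makes $f_V,g_V$ continuous for a.e.\ $V$, so at any point with $f_Vg_V>0$ both are positive on a neighbourhood, producing an open subset of $P_V(A)\cap P_V(B)$.

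For (iii) I introduce the intersection measure $\sigma_V:=f_V\,\nu_V$ on $V$. It is supported on $\{f_V>0\}\cap\spt\nu_V\subset P_V(A)\cap P_V(B)$, and by the identity above its total mass averages to $c'\,I_m(\mu,\nu)>0$, so $\sigma_V$ is a non-trivial measure for positively many $V$. The dimension bound will then follow once I prove, for every $s<b$, that
\[
\int_{G(n,m)}I_s(\sigma_V)\,d\gamma_{n,m}(V)=\int_{G(n,m)}\iint|x-y|^{-s}f_V(x)f_V(y)\,d\nu_V(x)\,d\nu_V(y)\,d\gamma_{n,m}(V)<\infty,
\]
for then $I_s(\sigma_V)<\infty$ on a set of full measure, hence on a set of positive measure $\sigma_V$ is a non-trivial measure of finite $s$-energy carried by $P_V(A)\cap P_V(B)$, giving $\dim\bigl(P_V(A)\cap P_V(B)\bigr)\ge s$; letting $s\uparrow b\uparrow\dim B$ completes the proof.

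The main obstacle is exactly this energy estimate, and it is precisely where the hypothesis $\dim A+\dim B>2m$ enters. The difficulty is already present in making sense of $\sigma_V$: when $a<2m$ the density $f_V$ has smoothness $\tfrac12(a-m)<\tfrac m2$ and is \emph{not} continuous, so it is defined only quasi-everywhere, off exceptional sets of dimension $2m-a$; since $b>2m-a$ by hypothesis, $f_V$ is defined $\nu_V$-a.e.\ and the product $f_V\nu_V$ is legitimate. To bound the energy I would pass to Fourier, writing $\widehat{\sigma_V}=\widehat{f_V}\ast\widehat{\nu_V}$ (convolution on $V$, with $\widehat{f_V}=\widehat\mu$ and $\widehat{\nu_V}=\widehat\nu$ on $V$) and $I_s(\sigma_V)=c\int_V|\widehat{\sigma_V}(\eta)|^2|\eta|^{s-m}\,d\mathcal{H}^m(\eta)$, then integrate over $G(n,m)$ with the Grassmannian formula and estimate the resulting kernel by Cauchy–Schwarz against the Frostman weights supplied by $I_a(\mu)$ and $I_b(\nu)$; the convergence of the ensuing $\xi$-integral demands simultaneously $s<b$ and $a+b>2m$. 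Carrying this out rigorously — including the mollification $\sigma_V=\lim_{\delta\to0}(f_V\ast\psi_\delta)\,\nu_V$ needed to justify the pointwise evaluation of $f_V$ and the interchange of limits with the averaging over $V$ — is the technical heart of the argument.
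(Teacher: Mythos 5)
Your arguments for parts (i) and (ii) are correct and essentially identical to the paper's: the same Parseval-plus-Grassmannian-averaging identity yielding $c'I_m(\mu,\nu)>0$, absolute continuity with $L^2$ densities for (i), and $L^1$ Fourier transforms hence continuous densities for (ii). (Minor point: boundedness of the Riesz potential $U^m_\mu$ requires the Frostman growth condition $\mu(B(x,r))\lesssim r^a$, not merely $I_a(\mu)<\infty$; alternatively, finiteness of $I_m(\mu,\nu)$ follows from Cauchy--Schwarz against $I_m(\mu)^{1/2}I_m(\nu)^{1/2}$, which is how the paper handles it.)

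Part (iii), however, contains a genuine gap, and you have located it yourself: the entire argument rests on the estimate $\int_{G(n,m)} I_s(f_V\nu_V)\,d\gamma_{n,m}(V)<\infty$ for $s<b$, which you do not prove but defer as ``the technical heart.'' This is not a routine computation that can be left to the reader. The averaging formula $\int_G\int_V h\,d\mathcal{H}^m d\gamma_{n,m}=c\int_{\Rn}|x|^{m-n}h(x)\,dx$ linearizes only expressions involving a \emph{single} integral over $V$; on the Fourier side, $I_s(f_V\nu_V)=c\int_V|\widehat{\mu}\ast_V\widehat{\nu}(\eta)|^2|\eta|^{s-m}\,d\eta$ is quartic in the measures and involves three independent frequency variables on $V$, so the Grassmannian average does not collapse to an integral over $\Rn$, and a crude Cauchy--Schwarz does not visibly close the estimate under the sole hypothesis $a+b>2m$. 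Moreover, the claim is stronger than what is needed: the set where $f_V$ is unbounded, though of small capacity, may still carry enough $\nu_V$-mass to make the energy of the untruncated product measure infinite.

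The paper avoids this difficulty entirely with a truncation trick, and this is the idea missing from your proposal. Using the Sobolev bound $P_{V\sharp}\mu\in H^{(s-m)/2}(V)$ together with the maximal function theorem \cite[Theorem 17.3]{Mat2} (applicable exactly because $2m-s<t$, i.e.\ the hypothesis $\dim A+\dim B>2m$), one gets that $\psi^V_\delta\ast P_{V\sharp}\mu$ converges $\nu_V$-almost everywhere to $f_V$ with an integrable majorant; combined with your mass identity this shows $f_V$ is \emph{finite} $\nu_V$-a.e.\ and positive on a set of positive $\nu_V$-measure for positively many $V$. One then replaces $f_V\nu_V$ by $\chi_{\{f_V\le C_V\}}f_V\,\nu_V$ with $C_V$ large enough that this measure is still nontrivial; its $t$-energy is trivially at most $C_V^2 I_t(\nu_V)<\infty$, since $I_t(P_{V\sharp}\nu)<\infty$ for a.e.\ $V$ when $t<m$. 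No energy estimate for the untruncated intersection measure is ever needed, and the hard step you postponed simply disappears.
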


How sharp is Theorem \ref{main1}? In (i), the strict inequalities $\dim A > m$ and $\dim B > m$ are obviously necessary. The situation in (ii) is only clear when $m = 1$: using Besicovitch sets, one can easily find a set of full Lebesgue measure in the plane, all of whose projections on lines have empty interior, see \cite[Example 11.5]{Mat2}. For $m > 1$, such examples are not known, and the question remains open. As for (iii), the following example establishes the sharpness of the bounds in the plane, at least:
\begin{example}\label{mainEx} Let $0 \leq t \leq 1 \leq s \leq 2$, and $s+t = 2$. Then, there exist compact sets $A,B\subset\R^2$ with $\dim A = s$ and $\dim B = t$ such that $P_L(A)\cap P_L(B)=\emptyset$ for $\gamma_{2,1}$ almost all $L\in G(2,1)$. 
\end{example}

Our initial motivation to study the questions in Theorem \ref{main1} was, in fact, an application to the dimension theory of plane sections. Marstrand proved in \cite{Mar} that if $s > 1$, and $A \subset \R^{2}$ is an $\mathcal{H}^{s}$ measurable set with $0 < \mathcal{H}^{s}(A) < \infty$, then $\dim [A \cap (x + L)] = s - 1$ for $\mathcal{H}^{s} \times \gamma_{2,1}$ almost all point-line pairs $(x,L) \in A \times G(2,1)$. The higher dimensional generalisation is due to the first author \cite{Mat}: if $0 < \mathcal{H}^{s}(A) < \infty$ for $m < s < n$, then $\dim [A \cap (V + x)] = s - m$ for $\mathcal{H}^{s} \times \gamma_{n,n - m}$ almost all pairs $(x,V) \in A \times G(n,n - m)$. 

In the present paper, we are interested in exceptional set estimates for the results above. The reasonable question seems to be the following: for how many points $x \in \R^{n}$ can it happen that $\dim [A \cap (x + L)] < s - m$ for $\gamma_{n,n - m}$ almost all planes $V \in G(n,n - m)$? Applying (i) of Theorem \ref{main1}, we prove that such exceptional points $x$ are contained in a set of dimension at most $m$:
\begin{thm}\label{main2}
Let $m<s\leq n$ and let  $A\subset\Rn$ be $\mathcal H^s$ measurable with $0<\mathcal H^s(A)<\infty$. Then, the set $B \subset \R^{n}$ of points $x \in \R^{n}$ with
$$\gamma_{n,n-m}(\{V\in G(n,n-m):\dim A\cap(V+x)=s-m\}) =0 $$ 
has $\dim B \leq m$.
\end{thm}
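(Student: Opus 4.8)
The plan is to argue by contradiction and to reduce the statement about plane sections to the mutual energy estimate underlying Theorem~\ref{main1}(i). Write $\mu = \mathcal{H}^s|_A$, a finite nonzero measure of dimension $s > m$, and suppose toward a contradiction that $\dim B > m$. Since the set $B$ is analytic (this rests on the standard measurability fact that $(V,x) \mapsto \dim [A \cap (V+x)]$ is a Borel function), it contains a compact set $K$ with $\dim K = t > m$, and Frostman's lemma supplies a probability measure $\nu$ on $K$ with $I_{t'}(\nu) < \infty$ for some $t' \in (m,t)$. The dictionary I use throughout is that, for $W = V^{\perp} \in G(n,m)$ and $x \in \Rn$, one has $A \cap (V+x) = A \cap P_W^{-1}(P_W x)$; in particular the section dimension depends on $(V,x)$ only through the projected point $P_W x$, and $A \cap (V+x) \neq \emptyset$ exactly when $P_W x \in P_W(A)$.

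Two classical inputs feed the argument. First, since $\dim A = s > m$ and $I_{t'}(\nu) < \infty$ with $t' > m$, the projection theorem for measures gives, for $\gamma_{n,m}$-almost every $W$, that $\mu_W := (P_W)_{\#}\mu$ and $\nu_W := (P_W)_{\#}\nu$ are absolutely continuous with respect to $\mathcal{H}^m$ on $W$; denote their densities by $f_W$ and $g_W$. Second, the section-dimension theorem of Marstrand and Mattila recalled in the introduction states that $\dim [A \cap (V+x)] = s-m$ for $\mathcal{H}^s \times \gamma_{n,n-m}$-almost all $(x,V)$; read through the dictionary above and via Fubini, this says that for $\gamma_{n,m}$-almost every $W$ one has $\dim A \cap P_W^{-1}(w) = s-m$ for $\mu_W$-almost every $w$. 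Setting $G_W = \{w \in W : \dim A \cap P_W^{-1}(w) = s-m\}$ and recalling $\mu_W = f_W \, \mathcal{H}^m$, this means precisely that $\{f_W > 0\} \subseteq G_W$ up to an $\mathcal{H}^m$-null set, for almost every $W$.

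Now I bring in that the points of $K$ are exceptional. By definition every $x \in K \subseteq B$ satisfies $\dim A \cap (V+x) \neq s-m$, equivalently $P_W x \notin G_W$, for $\gamma_{n,m}$-almost every $W$. Integrating the indicator of $\{(W,x) : P_W x \in G_W\}$ and applying Fubini yields $\nu_W(G_W) = \nu(\{x : P_W x \in G_W\}) = 0$ for almost every $W$, whence $g_W = 0$ $\mathcal{H}^m$-almost everywhere on $G_W$. Combining this with $\{f_W > 0\} \subseteq G_W$ forces $f_W g_W = 0$ $\mathcal{H}^m$-almost everywhere, so $\int_W f_W g_W \, d\mathcal{H}^m = 0$ for $\gamma_{n,m}$-almost every $W$. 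On the other hand, the mutual energy identity behind Theorem~\ref{main1}(i) evaluates the average of these integrals as
$$\int_{G(n,m)} \int_W f_W(w)\, g_W(w) \, d\mathcal{H}^m(w)\, d\gamma_{n,m}(W) = c(n,m)\iint |x-y|^{-m}\, d\mu(x)\, d\nu(y),$$
whose right-hand side is finite (because $\nu$ is Frostman of exponent $t' > m$, so $\int |x-y|^{-m}\, d\nu(y)$ is uniformly bounded, while $\mu(\Rn) = \mathcal{H}^s(A) < \infty$) and strictly positive (positive integrand, nonzero measures). This contradicts the vanishing of the left-hand side, so $\dim B \leq m$.

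The step I expect to be most delicate is exactly this matching of measures. The bare statement of Theorem~\ref{main1}(i) only yields positively many $W$ with $\mathcal{H}^m(P_W(A) \cap P_W(K)) > 0$, which is genuinely too weak: the overlap could lie in the part of $P_W(A)$ where $f_W$ vanishes and where the section theorem gives no control. This is why I route the argument through the mutual energy $\iint |x-y|^{-m}\, d\mu\, d\nu$ rather than through the overlap of projections, since the energy sees precisely the sets $\{f_W > 0\}$ and $\{g_W > 0\}$ on which the section theorem and the exceptionality hypothesis are both available. The remaining technical points—the Borel measurability of the section-dimension function, and the absolute continuity of $\mu_W$ for almost every $W$ (which holds since $\dim A > m$, after restricting $\mathcal{H}^s|_A$ to a compact subset of finite $u$-energy for some $u \in (m,s)$ if one wishes to invoke the energy form of the projection theorem)—are routine; it is fortunate that the section theorem delivers the exact value $s-m$ on $\{f_W>0\}$, so that no separate upper bound for the section dimensions enters.
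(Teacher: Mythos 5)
Your proof is correct, and its skeleton coincides with the paper's own proof of Theorem \ref{sect-thm}: both argue by contradiction, place a Frostman measure $\nu$ of finite $m$-energy on the hypothetical exceptional set, replace $\mathcal{H}^{s}|_{A}$ by a restriction $\mu$ of finite $m$-energy, feed the Marstrand--Mattila section theorem (for $\mu$-a.e.\ point) and the exceptionality hypothesis (for $\nu$-a.e.\ point) through Fubini, and extract the contradiction from the mutual-energy positivity of Theorem \ref{int-proj} --- and, exactly as you anticipated, the paper also needs the measure-level form of that theorem rather than the set-level statement of Theorem \ref{main1}(i): it works with the set $C_{V}=\{a\in V^{\perp}: P_{V^{\perp}\sharp}\mu(a)\,P_{V^{\perp}\sharp}\nu(a)>0\}$ of positive density overlap. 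The one genuine difference is the endgame. The paper invokes the disintegration formula \eqref{disint} and the identity \eqref{disint1}: for a good $V$ with $\mathcal{H}^{m}(C_{V})>0$ it uses the sliced measures $\mu_{V^{\perp},a}$, $\nu_{V^{\perp},a}$ to manufacture two points $x,y$ with $\dim A\cap(V+x)\geq s-m$, $\dim A\cap(V+y)<s-m$ and $V+x=V+y$. You avoid slicing altogether by observing that $\dim A\cap(V+x)$ depends on $x$ only through $P_{W}x$, $W=V^{\perp}$, so both inputs become statements about the densities on $W$: $\{f_{W}>0\}\subseteq G_{W}$ up to null sets, while $g_{W}=0$ a.e.\ on $G_{W}$, forcing $\int_{W} f_{W}g_{W}\,d\mathcal{H}^{m}=0$ for a.e.\ $W$ against the positive average \eqref{mutual1}. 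Since the paper's sets $A_{V}$, $B_{V}$ are unions of planes parallel to $V$, its sliced-measure computation is in effect carrying out the same factorization; your version is therefore a mild but real simplification, dispensing with \eqref{disint}--\eqref{disint1} at no cost, while the paper's version has the small virtue of exhibiting explicit witnesses $x,y$ on a common plane. Both routes require the same measurability input (the Borel regularity of $(x,V)\mapsto\dim A\cap(V+x)$, cited from \cite{MM}) and the same energy reductions.
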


An earlier result in this vein was obtained in $\R^{2}$ by the second author: the main result of \cite{O} states (after some trickery with projective transformations) that $\dim [B \cap L] \leq 2 - s$ for every line $L \subset \R^{2}$. For all we know, it is plausible that $\dim B \leq 2 - s$:
\begin{question}\label{sharpBound} Assume that $1 < s \leq 2$ and $A \subset \R^{2}$ is $\mathcal{H}^{s}$ measurable with $0 < \mathcal{H}^{s}(A) < \infty$. Is it true that the set $B \subset \R^{2}$ of points $x \in \R^{2}$ with
\begin{displaymath} \gamma_{2,1}(\{L \in G(2,1) : \dim [A \cap (L + x)] = s - 1\}) = 0 \end{displaymath}
has $\dim B \leq 2 - s$? \end{question} 
If true, this bound would be sharp: considering the sets $A,B$ in Example \ref{mainEx}, it is clear that $\gamma_{2,1}(\{L \in G(2,1) : A \cap (L + x) \neq \emptyset\}) = 0$ for all $x \in B$. In particular, for $t < 1 < s$, this means that $\gamma_{2,1}(\{L \in G(2,1) : A \cap (L + x) = s - 1\}) = 0$ for all $x \in B$, and hence the exceptional set $B$ identified in Theorem \ref{main2} can have dimension at least $t = 2 - s$ in the plane.

Finally, we observe that the case $m = n - 1$ of Theorem \ref{main2} immediately gives the following corollary for \emph{radial projections}:
\begin{cor}\label{mainCor} If $A\subset\Rn$ is a Borel set with dimension $\dim A > n-1$, then $A$ projects radially onto a set positive $(n-1)$-measure from all points of $\Rn$, except those in a set of dimension at most $n-1$. In short, the set of points from which $A$ is not "visible" is at most $(n - 1)$-dimensional. \end{cor}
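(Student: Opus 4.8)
The plan is to deduce this directly from Theorem~\ref{main2} in the case $m = n-1$, after translating its statement about line sections into the language of radial projections. Two preliminary reductions are needed. First, Theorem~\ref{main2} is stated for sets of finite positive $\mathcal H^s$ measure, whereas here I only assume $\dim A > n-1$; to bridge this, I would fix any $s$ with $n-1 < s < \dim A$ and invoke the standard consequence of Frostman's lemma that produces a compact subset $A' \subset A$ with $0 < \mathcal H^s(A') < \infty$. Since $A' \subset A$, it suffices to establish visibility of $A'$. Second, I would record that with $m = n-1$ we have $n-m = 1$, so $G(n,n-m) = G(n,1)$ is the space of lines through the origin, and the affine planes $V + x$ occurring in Theorem~\ref{main2} are precisely the lines through the point $x$.

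Next I would set up the dictionary between $G(n,1)$ and radial projection from $x$. Writing $\pi_x(y) = (y-x)/|y-x|$ for the map $\Rn \stm \{x\} \to \Sn$, the antipodal quotient $\Sn \to G(n,1)$, $v \mapsto \spa(v)$, is a two-to-one covering under which normalised surface measure on $\Sn$ pushes forward to the Haar measure $\gamma_{n,1}$. The observation driving the whole argument is the equivalence
\begin{equation*}
A' \cap (V+x) \neq \emptyset \quad\Longleftrightarrow\quad V = \spa(v) \text{ for some } v \in \pi_x(A') \cup (-\pi_x(A')),
\end{equation*}
valid because a point of $A'$ lies on the line $V+x$ exactly when its direction as seen from $x$ spans $V$.

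To finish, I would apply Theorem~\ref{main2} to $A'$ with $m = n-1$, obtaining an exceptional set $B$ with $\dim B \le n-1$ such that for every $x \notin B$ the set of lines $V \in G(n,1)$ with $\dim[A' \cap (V+x)] = s-(n-1)$ has positive $\gamma_{n,1}$ measure. Since $s > n-1$, the exponent $s-(n-1)$ is positive, so in particular $A' \cap (V+x) \neq \emptyset$ for a set of lines of positive $\gamma_{n,1}$ measure; the displayed equivalence then forces $\pi_x(A') \cup (-\pi_x(A'))$ to have positive surface measure on $\Sn$, hence so does $\pi_x(A')$, and a fortiori $\pi_x(A)$. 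This gives visibility of $A$ from every $x \notin B$ with $\dim B \le n-1$. There is essentially no hard step here once Theorem~\ref{main2} is available; the only point demanding a moment's care is the passage through the two-to-one covering $\Sn \to G(n,1)$, namely checking that positive $\gamma_{n,1}$ measure of the visible lines still yields positive $\mathcal H^{n-1}$ measure for $\pi_x(A')$ itself. This is immediate, since if $\pi_x(A')$ and its reflection were both null, so would be their union.
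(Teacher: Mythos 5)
Your proposal is correct and follows exactly the route the paper intends: the paper states Corollary~\ref{mainCor} as an immediate consequence of Theorem~\ref{main2} with $m = n-1$, and your write-up supplies precisely the intended details (passing to a compact subset $A' \subset A$ with $0 < \mathcal{H}^{s}(A') < \infty$, as the paper also does for its corollary to Theorem~\ref{sect-thm}, and translating lines through $x$ into directions via the two-to-one map $\Sn \to G(n,1)$, under which $\gamma_{n,1}$ corresponds to surface measure). The only point worth polishing is that $\pi_{x}$ should be applied to $A' \setminus \{x\}$ in case $x \in A'$, which is harmless here since the relevant intersections $A' \cap (V+x)$ have positive dimension and thus contain points other than $x$.
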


The bound in Corollary \ref{mainCor} is stronger than the one attainable by the transversality method of Peres and Schlag; for comparison, the main result in \cite{PS} would imply that the exceptional set has dimension at most $2n - 1 - \dim A$. In analogy with Question \ref{sharpBound}, it seems plausible to conjecture that the sharp bound is $2(n - 1) - \dim A$. 

The next section contains some preliminaries, and the proof of Theorem \ref{main1} is contained in Section \ref{projectionSection}. Section \ref{exampleSection} contains the details of Example \ref{mainEx}, and Theorem \ref{main2} is proved in Section \ref{sectionSection}.

\section{Preliminaries} 

For $A\subset\Rn$ we denote by $\mathcal M(A)$ the set of Borel measures $\mu$ with $0<\mu(A)<\infty$ and with compact support $\spt\mu\subset A$. The $s$-energy of $\mu$ is
$$I_s(\mu)=\iint|x-y|^{-s}\,d\mu x\,d\mu y=c(n,s)\int_{\Rn}|\widehat{\mu}(x)|^2|x|^{s-n}\,dx.$$
The Fourier transform of $\mu$ is defined by $\widehat{\mu}(x)=\int e^{-2\pi ix\cdot y}\, d\mu y.$ 
For the second equality, see, for example, \cite{Mat2}, Theorem 3.10. If $0<u<n$, and $\mu, \nu\in\mathcal M(\Rn)$ are two measures with $\int_{\Rn}|\widehat{\mu}(x)\widehat{\nu}(x)||x|^{u-n}\,dx<\infty$, then their mutual $u$-energy is given by 
\begin{equation}\label{mutual}
I_u(\mu,\nu)=\iint|x-y|^{-u}\,d\mu x\,d\nu y=c(n,u)\int_{\Rn}\widehat{\mu}(x)\overline{\widehat{\nu}(x)}|x|^{u-n}\,dx.
\end{equation}
The latter formula is stated in \cite{Mat2}, Section 3.5, for functions, but it extends to measures by standard convolution approximation. Notice that if $u=(s+t)/2$, then we have by Schwartz's inequality
\begin{equation}\label{mutual3}
\int|\widehat{\mu}(x)\widehat{\nu}(x)||x|^{u-n}\,dx\leq 
\left(\int|\widehat{\mu}(x)|^2|x|^{s-n}\,dx\right)^{1/2}\left(\int|\widehat{\nu}(x)|^2|x|^{t-n}\,dx\right)^{1/2},\end{equation}
so
\begin{equation}\label{mutual4}
I_{(s+t)/2}(\mu,\nu)\lesssim\left(I_s(\mu)I_t(\nu)\right)^{1/2}.
\end{equation}
It follows that (\ref{mutual}) is valid provided $I_s(\mu)<\infty, I_t(\nu)<\infty$ and $s+t\geq 2u$.

By classical results of Frostman, if $A\subset\Rn$ is a Borel set with $\dim A > s>0$, then there is $\mu\in\mathcal M(A)$ with $I_s(\mu)<\infty$, cf. \cite{Mat2}, Theorem 2.8. 

Suppose that $\mu\in\mathcal M(\Rn)$ and $I_m(\mu)<\infty$. Then for $\gamma_{n,m}$ almost all $V\in G(n,m)$ the image $P_{V\sharp}\mu$ of $\mu$ under the projection $P_V$ is absolutely continuous with the Radon-Nikodym derivative, which we also denote by $P_{V\sharp}\mu$, in $L^2(V)$, cf. \cite{Mat1}, Theorem 9.7. Moreover, we have the disintegration formula
\begin{equation}\label{disint}
\int f\,d\mu = \int_V\int f\,d\mu_{V,a}\,d\mathcal H^ma
\end{equation}
for non-negative Borel functions $f$. Here $\mathcal H^m$ denotes the $m$-dimensional Hausdorff measure, which on an $m$-plane is just the Lebesgue measure. The sliced measures $\mu_{V,a}$ have supports in $(V^{\perp}+a)\cap \spt\mu$.  Equation (\ref{disint}) is a standard disintegration formula and is proven for example in \cite{Mat1}, (10.6). It is stated there for continuous functions, but it extends immediately.

When $B\subset V$ is a Borel set and $f$ is the characteristic function of $P_V^{-1}(B)$, (\ref{disint}) becomes
$$\mu(P_V^{-1}(B))= \int_V\mu_{V,a}(P_V^{-1}(B))\,d\mathcal H^ma = \int_B\mu_{V,a}(\Rn)\,d\mathcal H^ma.$$
On the other hand, by the definition of the image measure and the Radon-Nikodym derivative,
$$\mu(P_V^{-1}(B))= P_{V\sharp}\mu(B) = \int_B P_{V\sharp}\mu(a)\,d\mathcal H^ma.$$
Hence for $\mathcal H^m$ almost all $a\in V$,
\begin{equation}\label{disint1}
\mu_{V,a}(\Rn)=P_{V\sharp}\mu(a)
\end{equation}

In case $0<s<m$, we have $I_s(P_{V\sharp}\mu)<\infty$ for $\gamma_{n,m}$ almost all $V\in G(n,m)$ provided $I_s(\mu)<\infty$, see the proof of Theorem 9.3 in \cite{Mat1}.

We will make often use of the following formula, see for example \cite{Mat2}, (24.2):
\begin{equation}\label{be5}
\int_{G(n,m)}\int_{V}f(x)\,d\mathcal H^mxd\gamma_{n,m}V=c(n,m)\int_{\Rn}|x|^{m-n}f(x)\,dx.
\end{equation}
It is valid for Borel functions $f$ with $\int_{\Rn}|x|^{m-n}|f(x)|\,dx<\infty$. 
When $m=1$, this is just the formula for integration in polar coordinates.

\section{Intersections of projections}\label{projectionSection}

In this section we prove the main theorem for projections, Theorem \ref{main1}, and some variants of it. We recall the statement:

\begin{thm}\label{int-proj0}
Let $A$ and $B$ be Borel subsets of $\Rn$.
\begin{itemize}
\item[(i)] If $\dim A > m$ and  $\dim B > m$, then 
$$\gamma_{n,m}\left(\{V\in G(n,m): \mathcal H^m(P_V(A)\cap P_V(B))>0\}\right)>0.$$
\item[(ii)] If $\dim A > 2m$ and  $\dim B > 2m$, then 
$$\gamma_{n,m}\left(\{V\in G(n,m): \Int(P_V(A)\cap P_V(B))\neq \emptyset\}\right)>0.$$
\item[(iii)] If $\dim A > m, \dim B \leq m$ and $\dim A + \dim B > 2m$, then for every $\epsilon > 0$,
$$\gamma_{n,m}\left(\{V\in G(n,m): \dim (P_V(A)\cap P_V(B)) > \dim B - \epsilon\}\right)>0.$$
\end{itemize}
\end{thm}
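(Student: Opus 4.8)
The common engine for all three parts is a single energy identity. By Frostman's lemma, choose $\mu\in\mathcal M(A)$ and $\nu\in\mathcal M(B)$ with $I_s(\mu)<\infty$ and $I_t(\nu)<\infty$, where in each part $s$ and $t$ are taken just below $\dim A$ and $\dim B$; I will always arrange $s+t\geq 2m$, which is possible precisely under the stated hypotheses. Since $\widehat{P_{V\sharp}\mu}(\xi)=\widehat\mu(\xi)$ and $\widehat{P_{V\sharp}\nu}(\xi)=\widehat\nu(\xi)$ for $\xi\in V$, Plancherel on the $m$-plane $V$ gives $\int_V P_{V\sharp}\mu\, P_{V\sharp}\nu\,d\mathcal H^m=\int_V\widehat\mu(\xi)\overline{\widehat\nu(\xi)}\,d\mathcal H^m\xi$ whenever the projected densities lie in $L^2(V)$. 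Integrating over $G(n,m)$ and combining the integral-geometric identity (\ref{be5}) with (\ref{mutual}), I obtain
\[
\int_{G(n,m)}\int_V P_{V\sharp}\mu\, P_{V\sharp}\nu\,d\mathcal H^m\,d\gamma_{n,m}V=c(n,m)\int_{\Rn}|\xi|^{m-n}\widehat\mu(\xi)\overline{\widehat\nu(\xi)}\,d\xi=I_m(\mu,\nu).
\]
The right-hand side is finite by the remark following (\ref{mutual4}) (this is where $s+t\geq 2m$ enters) and strictly positive because $|x-y|^{-m}$ is bounded below on the compact set $\spt\mu\times\spt\nu$. This computation drives everything.

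For (i), take $s,t>m$, possible since $\dim A,\dim B>m$; then $I_m(\mu),I_m(\nu)<\infty$, so $P_{V\sharp}\mu,P_{V\sharp}\nu\in L^2(V)$ for a.e. $V$ and the identity applies. As the inner integral is nonnegative for every $V$ and the average is positive, $\int_V P_{V\sharp}\mu\, P_{V\sharp}\nu\,d\mathcal H^m>0$ on a positive $\gamma_{n,m}$-measure set of $V$; for such $V$ the sets $\{P_{V\sharp}\mu>0\}$ and $\{P_{V\sharp}\nu>0\}$, which modulo $\mathcal H^m$-null sets lie in $P_V(A)$ and $P_V(B)$, overlap in positive measure, giving $\mathcal H^m(P_V(A)\cap P_V(B))>0$. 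For (ii), with $\dim A,\dim B>2m$ take $s,t>2m$; a Cauchy--Schwarz splitting of $\int_{\Rn}|\xi|^{m-n}|\widehat\mu(\xi)|\,d\xi$ against the finite energy $\int|\widehat\mu|^2|\xi|^{s-n}\,d\xi$ shows, via (\ref{be5}), that $\widehat\mu$ restricted to $V$ lies in $L^1(V)$ for a.e. $V$, so $P_{V\sharp}\mu$ has a continuous representative, and likewise for $\nu$. Intersecting this full-measure set of $V$ with the positive-measure set from (i) yields positively many $V$ on which both densities are continuous and $\int_V P_{V\sharp}\mu\, P_{V\sharp}\nu>0$; the latter forces a point $a$ with both densities positive, and continuity upgrades this to an open neighbourhood contained in $P_V(A)\cap P_V(B)$, so the interior is nonempty.

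Part (iii) is the substantive one. Fix $\epsilon>0$ and, using $\dim A>m$, $\dim B\leq m$, $\dim A+\dim B>2m$, choose exponents $m<s<\dim A$ and $\dim B-\epsilon<t<\dim B$ with $s+t>2m$, and Frostman measures $\mu,\nu$ as above. Since $t<m$, the preliminaries give $I_t(P_{V\sharp}\nu)<\infty$ for a.e. $V$. My candidate measure on the intersection is the restriction $\sigma_V:=P_{V\sharp}\nu\!\mid_{P_V(\spt\mu)}$: its support lies in $P_V(\spt\nu)\cap P_V(\spt\mu)\subset P_V(A)\cap P_V(B)$, and by monotonicity of energy $I_t(\sigma_V)\leq I_t(P_{V\sharp}\nu)<\infty$ for a.e. $V$. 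Thus \emph{finiteness of the energy is automatic}, and the whole statement reduces to showing that $\sigma_V\neq 0$ --- that is, that $P_{V\sharp}\nu$ genuinely charges $P_V(\spt\mu)$ --- for a positive-measure set of $V$; a nonzero measure of finite $t$-energy on $P_V(A)\cap P_V(B)$ then gives $\dim(P_V(A)\cap P_V(B))\geq t>\dim B-\epsilon$.

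This non-degeneracy is the main obstacle. The difficulty is that $P_{V\sharp}\mu$ is now only an $L^2$ density while $P_{V\sharp}\nu$ may be purely singular, so the pairing $\int_V P_{V\sharp}\mu\,dP_{V\sharp}\nu$ is not literally defined. I would regularise by mollifying $\mu$ to $\mu_\delta=\mu*\psi_\delta$, for which $P_{V\sharp}\mu_\delta$ is continuous and $F_\delta(V):=\int_V P_{V\sharp}\mu_\delta\,dP_{V\sharp}\nu=\int(P_{V\sharp}\mu_\delta\circ P_V)\,d\nu$ is a bona fide nonnegative quantity; the master computation applied to $\mu_\delta$ gives $\int_{G(n,m)}F_\delta\,d\gamma_{n,m}=I_m(\mu_\delta,\nu)\to I_m(\mu,\nu)>0$ as $\delta\to0$, by dominated convergence from (\ref{mutual3}). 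Hence for small $\delta$ positively many $V$ satisfy $F_\delta(V)>0$, so $P_{V\sharp}\nu$ charges each collar $\{\,a:\operatorname{dist}(a,P_V(\spt\mu))<\delta\,\}$. The technical heart is to promote this to the \emph{undilated} set $P_V(\spt\mu)$ while retaining a positive-measure set of good $V$ as $\delta\to0$; I would extract weak-$*$ limits of the measures $P_{V\sharp}\mu_\delta\,dP_{V\sharp}\nu$, which are supported in the fixed compact set $P_V(\spt\mu)\cap P_V(\spt\nu)$, and prevent their mass from escaping into the vanishing collar by a uniform-integrability bound over $G(n,m)$, powered by the \emph{strict} inequality $s+t>2m$, which yields $I_{(s+t)/2}(\mu,\nu)<\infty$ at the supercritical exponent $(s+t)/2>m$. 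It is exactly here that strictness is indispensable: the borderline case $s+t=2m$, in which the overlap mass can slide entirely into ever-thinner collars, is realised by Example \ref{mainEx}. Establishing this uniform control, and hence the survival of positive overlap mass on $P_V(\spt\mu)$ itself, is the crux of the argument.
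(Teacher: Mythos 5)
Parts (i) and (ii) of your proposal are correct and follow the paper's own route essentially verbatim: the Plancherel--plus--\eqref{be5} identity reducing the average of $\int_V P_{V\sharp}\mu\,P_{V\sharp}\nu\,d\mathcal H^m$ to the mutual energy $I_m(\mu,\nu)>0$ gives (i), and the $L^1$-Fourier-transform/continuity upgrade under $s,t>2m$ gives (ii). Your reduction of (iii) to the non-degeneracy statement ``$P_{V\sharp}\nu$ charges $P_V(\spt\mu)$ for positively many $V$'' is also the right move, and your observation that the energy bound $I_t\bigl(P_{V\sharp}\nu|_{P_V(\spt\mu)}\bigr)\leq I_t(P_{V\sharp}\nu)<\infty$ is automatic is correct and is in effect how the paper concludes as well.

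However, the proof of (iii) has a genuine gap, and it sits exactly where you say it does: you never establish that the overlap mass survives as $\delta\to 0$. From \eqref{mutual2} you only get, for each fixed small $\delta$, a positive-measure set of $V$ with $F_\delta(V)>0$; these sets depend on $\delta$, and for any fixed $V$ the masses $F_\delta(V)$ could a priori tend to $0$, with all the mass sliding into the vanishing collar. Your proposed fix --- weak-$*$ limits of $P_{V\sharp}\mu_\delta\,dP_{V\sharp}\nu$ controlled by ``a uniform-integrability bound powered by $I_{(s+t)/2}(\mu,\nu)<\infty$'' --- is not a proof: finiteness of the mutual energy at the exponent $(s+t)/2$ is a single scalar bound and does not by itself yield uniform integrability of the densities $\psi^V_\delta\ast P_{V\sharp}\mu$ with respect to $P_{V\sharp}\nu$ (which is what Dunford--Pettis would need so that the limit measure is still of the form $g\,dP_{V\sharp}\nu$, hence witnesses $P_{V\sharp}\nu(P_V(\spt\mu))>0$), nor does it control the limit uniformly over $G(n,m)$. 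The paper fills precisely this hole with a different, concrete tool: using \eqref{be5} once more it shows that $P_{V\sharp}\mu$ lies in the fractional Sobolev space $H^{(s-m)/2}(V)$ for $\gamma_{n,m}$ almost all $V$, and then invokes the maximal-function theorem for Sobolev functions against measures of finite energy (Theorem 17.3 in \cite{Mat2}, applied with the numerology $m-2\cdot\tfrac{s-m}{2}=2m-s<t$, which is exactly where $s+t>2m$ enters), obtaining that $\psi^V_\delta\ast P_{V\sharp}\mu\to P_{V\sharp}\mu$ pointwise $P_{V\sharp}\nu$-a.e.\ with $M_V(P_{V\sharp}\mu)\in L^1(P_{V\sharp}\nu)$. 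Dominated convergence then gives $L^1(P_{V\sharp}\nu)$ convergence, so the limit density $f_V$ has positive integral for positively many $V$, and the truncation $\chi_{\{f_V\leq C_V\}}f_V\,P_{V\sharp}\nu$ is a nontrivial measure of finite $t$-energy supported in $P_V(\spt\mu)\cap P_V(\spt\nu)$. In short: your skeleton matches the paper's, but the ``technical heart'' you explicitly leave open is the actual content of part (iii), and the mechanism you gesture at is not the one that works; the Sobolev regularity of the projected density combined with the maximal/differentiation theorem is what delivers the required uniform control.
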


Part (i) is an immediate corollary of the following:

\begin{thm}\label{int-proj}
Let $\mu, \nu\in\mathcal M(\Rn)$ with $I_m(\mu)<\infty$ and  $I_{m}(\nu)<\infty$. Then 
$$\iint_V P_{V\sharp}\mu(v) P_{V\sharp}\nu(v)\, d\mathcal H^mv\,d\gamma_{n,m}V>0$$
and
$$\gamma_{n,m}(\{V\in G(n,m): \mathcal H^m\left(P_V(\spt\mu)\cap P_V(\spt\nu))>0\}\right)>0.$$
\end{thm}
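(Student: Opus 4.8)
The plan is to prove the first (integral) assertion by Fourier analysis, and then deduce the second (positive-measure) assertion as a consequence. For the integral, I would start by using the identity (\ref{disint1}), which tells us that the Radon--Nikodym derivative $P_{V\sharp}\mu(v)$ agrees, for $\mathcal{H}^m$ almost all $v$, with the total mass of the sliced measure; but the cleaner route is purely spectral. Since $I_m(\mu)<\infty$ and $I_m(\nu)<\infty$, the projected measures $P_{V\sharp}\mu$ and $P_{V\sharp}\nu$ lie in $L^2(V)$ for $\gamma_{n,m}$ almost every $V$ (by \cite{Mat1}, Theorem 9.7). The key point is Parseval on the $m$-plane $V$:
\begin{equation*}
\int_V P_{V\sharp}\mu(v)\,P_{V\sharp}\nu(v)\,d\mathcal{H}^m v = \int_V \widehat{P_{V\sharp}\mu}(\xi)\,\overline{\widehat{P_{V\sharp}\nu}(\xi)}\,d\mathcal{H}^m\xi,
\end{equation*}
and the crucial compatibility fact that the Fourier transform of a projected measure is the restriction of $\widehat{\mu}$ to $V$, i.e. $\widehat{P_{V\sharp}\mu}(\xi)=\widehat{\mu}(\xi)$ for $\xi\in V$. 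Substituting this and integrating over the Grassmannian, the left-hand side of the theorem becomes $\int_{G(n,m)}\int_V \widehat{\mu}(\xi)\,\overline{\widehat{\nu}(\xi)}\,d\mathcal{H}^m\xi\,d\gamma_{n,m}V$.

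The next step is to apply the integralgeometric formula (\ref{be5}) with $f(\xi)=\widehat{\mu}(\xi)\,\overline{\widehat{\nu}(\xi)}$, which collapses the double integral over planes into a single weighted integral over $\Rn$:
\begin{equation*}
\int_{G(n,m)}\int_V \widehat{\mu}(\xi)\,\overline{\widehat{\nu}(\xi)}\,d\mathcal{H}^m\xi\,d\gamma_{n,m}V = c(n,m)\int_{\Rn}|\xi|^{m-n}\,\widehat{\mu}(\xi)\,\overline{\widehat{\nu}(\xi)}\,d\xi.
\end{equation*}
By the mutual-energy identity (\ref{mutual}) with $u=m$, the right-hand side equals (a positive constant times) the mutual energy $I_m(\mu,\nu)=\iint|x-y|^{-m}\,d\mu x\,d\nu y$, which is manifestly strictly positive since $\mu$ and $\nu$ are nonzero positive measures and the kernel is positive. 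The validity of (\ref{mutual}) in this regime is guaranteed by the remark following (\ref{mutual4}): taking $s=t=m$ so that $u=m=(s+t)/2$, the finiteness $I_m(\mu)<\infty$, $I_m(\nu)<\infty$ together with Schwartz's inequality (\ref{mutual3}) ensures the absolute convergence $\int|\widehat{\mu}\,\widehat{\nu}|\,|\xi|^{m-n}\,d\xi<\infty$. This settles the first assertion, and positivity is immediate rather than the obstacle.

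For the second assertion, the point is that positivity of $\int_{G(n,m)}\int_V P_{V\sharp}\mu\, P_{V\sharp}\nu\,d\mathcal{H}^m v\,d\gamma_{n,m}V$ forces a positive-measure set of good planes. Indeed, for any $V$ the integrand $\int_V P_{V\sharp}\mu(v)\,P_{V\sharp}\nu(v)\,d\mathcal{H}^m v$ is nonnegative (both densities are nonnegative), so it must be strictly positive on a set of $V$ of positive $\gamma_{n,m}$ measure. For each such $V$, the product density $P_{V\sharp}\mu\cdot P_{V\sharp}\nu$ is positive on a set of positive $\mathcal{H}^m$ measure in $V$; since this product vanishes wherever either factor vanishes, the common support of $P_{V\sharp}\mu$ and $P_{V\sharp}\nu$ must have positive $\mathcal{H}^m$ measure. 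Finally $\spt P_{V\sharp}\mu\subset P_V(\spt\mu)$ and likewise for $\nu$, so $\mathcal{H}^m\big(P_V(\spt\mu)\cap P_V(\spt\nu)\big)>0$ for this positive-measure family of $V$, as claimed. The only technical care needed is to ensure that we restrict to the full-measure set of $V$ on which both densities genuinely lie in $L^2$ so that all the $L^2$ and support manipulations are legitimate; beyond that bookkeeping, the argument is a direct chain of standard identities, and I expect no serious obstacle.
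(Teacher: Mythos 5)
Your proposal is correct and follows essentially the same route as the paper's proof: Plancherel on the plane $V$ combined with the identity $\widehat{P_{V\sharp}\mu}(\xi)=\widehat{\mu}(\xi)$ for $\xi\in V$, then formula (\ref{be5}) and the mutual energy identity (\ref{mutual}) (justified via (\ref{mutual3}) with $s=t=m$) to reduce everything to the strictly positive quantity $I_m(\mu,\nu)$, after which the second assertion follows because $P_{V\sharp}\mu(a)P_{V\sharp}\nu(a)>0$ forces $a\in P_V(\spt\mu)\cap P_V(\spt\nu)$. No gaps worth noting.
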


\begin{proof}

As stated above, for $\gamma_{n,m}$ almost all $V\in G(n,m)$ the images of $\mu$ and $\nu$  under the projection $P_V$ are absolutely continuous with the Radon-Nikodym derivatives $P_{V\sharp}\mu$ and $P_{V\sharp}\nu$ in $L^2(V)$. Thus $P_{V\sharp}\mu P_{V\sharp}\nu\in L^1(V)$ for $\gamma_{n,m}$ almost all $V\in G(n,m)$. To prove the theorem it suffices to show that $\int_V P_{V\sharp}\mu P_{V\sharp}\nu\, d\mathcal H^m>0$ for 
$V\in G(n,m)$ in a set of positive measure, since $P_{V\sharp}\mu(a)P_{V\sharp}\nu(a)>0$ implies $a\in P_V(\spt\mu)\cap P_V(\spt\nu)$.

We immediately see from the definition of the Fourier transform that $\widehat{P_{V\sharp}\mu}(v)=\widehat{\mu}(v)$ for $v\in V$. Thus by 
Plancherel's formula in the $m$-dimensional space $V$,
$$\int_V P_{V\sharp}\mu(v) P_{V\sharp}\nu(v)\, d\mathcal H^mv  =\int_V \widehat{P_{V\sharp}\mu}(v) \overline{\widehat{P_{V\sharp}\nu}(v)}\, d\mathcal H^mv = \int_V \widehat{\mu}(v) \overline{\widehat{\nu}(v)}\, d\mathcal H^mv.$$
Recalling  (\ref{mutual3}) we integrate over $G(n,m)$ and use equation (\ref{be5}) and (\ref{mutual}) to get
\begin{align}\label{mutual1}
\iint_V &P_{V\sharp}\mu(v) P_{V\sharp}\nu(v)\, d\mathcal H^mv\,d\gamma_{n,m}V  
= \iint_V \widehat{\mu}(v) \overline{\widehat{\nu}(v)}\, d\mathcal H^mv\,d\gamma_{n,m}V\\
&=c(n,m)\int_{\Rn} |x|^{m-n}\widehat{\mu}(x) \overline{\widehat{\nu}(x)}\, dx = c'(n,m)\iint|x-y|^{-m}\,d\mu x\,d\nu x>0.\notag
\end{align}
\end{proof}

Part (ii) of Theorem \ref{int-proj0} follows from the second part of following theorem:

\begin{thm}\label{int-proj2} Assume that $s + t = 2m$ and $\mu,\nu\in\mathcal M(\R^{n})$ with $I_{s}(\mu) < \infty$ and $I_{t}(\nu) < \infty$. Then 
\begin{equation}\label{form0} \gamma_{n,m}\left(\{V\in G(n,m): P_V(\spt\mu)\cap P_V(\spt\nu)\neq \emptyset\}\right)>0. \end{equation}
If also $s>2m$ and $t>2m$, then 
\begin{equation}\label{form1} \gamma_{n,m}\left(\{V\in G(n,m): \Int(P_V(\spt\mu)\cap P_V(\spt\nu))\neq \emptyset\}\right)>0. \end{equation}
\end{thm}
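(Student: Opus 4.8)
My plan is to derive both parts from the integrated Fourier identity already established in the proof of Theorem \ref{int-proj}. By (\ref{be5}) applied to $\widehat{\mu}\overline{\widehat{\nu}}$ and then (\ref{mutual}), exactly as in (\ref{mutual1}), one has
\begin{equation*}
\iint_V \widehat{\mu}(v)\,\overline{\widehat{\nu}(v)}\, d\mathcal{H}^m v\, d\gamma_{n,m} V = c(n,m)\int_{\Rn}|x|^{m-n}\widehat{\mu}(x)\overline{\widehat{\nu}(x)}\, dx = c'(n,m)\iint |x-y|^{-m}\, d\mu x\, d\nu y = c'(n,m)\,I_m(\mu,\nu).
\end{equation*}
Since $s+t=2m$, taking $u=m=(s+t)/2$ in (\ref{mutual3}) and (\ref{mutual4}) shows that $\int_{\Rn}|x|^{m-n}|\widehat{\mu}\widehat{\nu}|\, dx < \infty$ and $I_m(\mu,\nu)\lesssim (I_s(\mu)I_t(\nu))^{1/2}<\infty$, while $I_m(\mu,\nu)>0$ because the kernel $|x-y|^{-m}$ is strictly positive and $\mu,\nu$ are nonzero. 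Applying (\ref{be5}) to $|\widehat{\mu}\widehat{\nu}|$ moreover shows that the inner integral $g(V):=\int_V \widehat{\mu}(v)\overline{\widehat{\nu}(v)}\, d\mathcal{H}^m v$ converges absolutely for $\gamma_{n,m}$ almost every $V$.

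To prove (\ref{form0}) I would identify $g(V)$ geometrically. For almost every $V$ consider the positive measure $\theta_V=(P_{V})_{\sharp}\mu \ast \widetilde{(P_{V})_{\sharp}\nu}$ on $V$, where $\widetilde{\lambda}$ denotes the reflection of $\lambda$. Since $\widehat{(P_V)_{\sharp}\mu}(v)=\widehat{\mu}(v)$ and $\widehat{\widetilde{(P_V)_{\sharp}\nu}}(v)=\overline{\widehat{\nu}(v)}$ for $v\in V$, we get $\widehat{\theta_V}(v)=\widehat{\mu}(v)\overline{\widehat{\nu}(v)}$, which lies in $L^1(V)$ for almost every $V$ by the previous paragraph. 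Fourier inversion on $V$ then shows that the positive measure $\theta_V$ has a continuous nonnegative density $f_V$ with $g(V)=f_V(0)\ge 0$. As $\spt\theta_V\subseteq P_V(\spt\mu)-P_V(\spt\nu)$, a compact set, the assumption $P_V(\spt\mu)\cap P_V(\spt\nu)=\emptyset$ forces $0\notin\spt\theta_V$, whence $f_V$ vanishes near the origin and $g(V)=0$. Since $\int g\, d\gamma_{n,m}=c'(n,m)I_m(\mu,\nu)>0$ while $g\ge 0$, the set $\{V:g(V)>0\}$ has positive $\gamma_{n,m}$-measure, and on it the projected supports must intersect, giving (\ref{form0}).

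For the interior statement (\ref{form1}) I would instead reduce to Theorem \ref{int-proj} under the hypotheses $s>2m$, $t>2m$. Here the compactness of the supports gives $I_m(\mu)\lesssim I_s(\mu)+\mu(\Rn)^2<\infty$ and likewise $I_m(\nu)<\infty$, so Theorem \ref{int-proj} yields $\int_V P_{V\sharp}\mu\, P_{V\sharp}\nu\, d\mathcal{H}^m v>0$ for $V$ in a set of positive measure. Splitting the integral in (\ref{be5}) at $|x|=1$ and using Cauchy--Schwarz against the finite energy $\int|\widehat{\mu}|^2|x|^{s-n}\,dx$ shows, for $s>2m$, that $\int_{\Rn}|x|^{m-n}|\widehat{\mu}(x)|\,dx<\infty$; hence $\widehat{P_{V\sharp}\mu}=\widehat{\mu}|_V\in L^1(V)$ and $P_{V\sharp}\mu$ has a continuous density for almost every $V$, and similarly for $\nu$. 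For a $V$ in the positive-measure set above at which both densities are continuous, the nonnegative continuous product $P_{V\sharp}\mu\cdot P_{V\sharp}\nu$ is not identically zero, so it is strictly positive on some open $U\subset V$. A standard argument using compactness of $\spt\mu$ shows that points where the continuous density $P_{V\sharp}\mu$ is positive belong to $\Int P_V(\spt\mu)$; thus $U\subset \Int P_V(\spt\mu)\cap \Int P_V(\spt\nu)\subset \Int(P_V(\spt\mu)\cap P_V(\spt\nu))$, which proves (\ref{form1}).

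The step I expect to require the most care is the identification $g(V)=f_V(0)$ in the proof of (\ref{form0}). One must verify that, for $\gamma_{n,m}$ almost every $V$, the measure $\theta_V$ is absolutely continuous with a genuinely continuous density — which rests precisely on the $L^1$-bound $\int_V|\widehat{\mu}\widehat{\nu}|\,dv<\infty$ coming from the finiteness of the mutual $m$-energy — and that the value of this density at the origin simultaneously equals the analytic quantity $g(V)$ and detects membership of $0$ in the support $P_V(\spt\mu)-P_V(\spt\nu)$. Building this bridge between $g(V)$ and the geometric condition $P_V(\spt\mu)\cap P_V(\spt\nu)\neq\emptyset$ is the crux; once it is in place, the positivity of the integrated energy closes the argument.
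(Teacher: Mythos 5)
Your proposal is correct and takes essentially the same route as the paper: both parts rest on the integrated identity $\iint_V \widehat{\mu}(v)\overline{\widehat{\nu}(v)}\,d\mathcal{H}^m v\,d\gamma_{n,m}V = c'(n,m)I_m(\mu,\nu)>0$, the $L^1(V)$-bound on $\widehat{\mu}\overline{\widehat{\nu}}$ for almost every $V$ followed by Fourier inversion to detect intersection of the projected supports, and, for \eqref{form1}, the continuity of both projected densities when $s,t>2m$ combined with the positivity of $\int_V P_{V\sharp}\mu\, P_{V\sharp}\nu\,d\mathcal{H}^m$. The only cosmetic difference is that you form the symmetric measure convolution $P_{V\sharp}\mu \ast \widetilde{P_{V\sharp}\nu}$, whereas the paper assumes $s\geq t$ and convolves the reflected $L^2$ density of $P_{V\sharp}\mu$ with the measure $P_{V\sharp}\nu$ --- the same object and the same inversion argument.
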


\begin{proof} Suppose that $s\geq t$ so that $P_{V\sharp}\mu$ is an $L^2$ function on $V$ for almost all $V$. Define $\mu_V(v)=P_{V\sharp}\mu(-v)$. 
By the above arguments we have again $\overline{\widehat{P_{V\sharp}\mu}}(v) \widehat{P_{V\sharp}\nu}\in L^1(V)$ with $\iint_V \overline{\widehat{P_{V\sharp}\mu}}(v) \widehat{P_{V\sharp}\nu}\, d\mathcal H^mv\,d\gamma_{n,m}V>0$. The inverse transform of $\overline{\widehat{P_{V\sharp}\mu}}(v) \widehat{P_{V\sharp}\nu}$ is $\mu_V\ast P_{V\sharp}\nu$. Thus for $\gamma_{n,m}$ positively many $V$, 
$\mu_V\ast P_{V\sharp}\nu$ is a continuous function on $V$ with $\mu_V\ast P_{V\sharp}\nu(0)>0$, that is
$$\int P_{V\sharp}\mu d P_{V\sharp}\nu = \mu_V\ast P_{V\sharp}\nu(0)>0.$$ 
This shows that the supports of $P_{V\sharp}\mu$ and $P_{V\sharp}\nu$ cannot be disjoint.

Suppose now that $s>2m$ and $t>2m$. Then by results of Falconer and O'Neil in \cite{FO} and Peres and Schlag in \cite{PS}, $\widehat{P_{V\sharp}\mu}, \widehat{P_{V\sharp}\nu}\in L^1(V)$ for $\gamma_{n,m}$ almost all $V\in G(n,m)$. This is very easy, let us check it for $\mu$: applying by (\ref{be5}) as in the previous proof and using Schwartz's inequality,
\begin{align*}
&\iint_{V\setminus B(0,1)} |\widehat{P_{V\sharp}\mu}(v)|\, d\mathcal H^mv\,d\gamma_{n,m}V  
=c(n,m)\int_{\Rn\setminus B(0,1)} |x|^{m-n}|\widehat{\mu}(x)|\,dx\\ 
&\leq c(n,m)\left(\int_{\Rn\setminus B(0,1)} |x|^{2m-s-n}|\,dx\right)^{1/2}\left(\int |x|^{s-n}|\widehat{\mu}(x)|^2\,dx\right)^{1/2}\lesssim I_s(\mu)^{1/2}<\infty.
\end{align*}
As $\widehat{P_{V\sharp}\mu}$ is bounded, the claim follows from this.

For any $V$ such that $\widehat{P_{V\sharp}\mu}, \widehat{P_{V\sharp}\nu}\in L^1(V)$, $P_{V\sharp}\mu$ and $\widehat{P_{V\sharp}\nu}$ are continuous. Above we found that 
$\int_V P_{V\sharp}\mu P_{V\sharp}\nu\, d\mathcal H^m >0$ for positively many $V$, whence also $\Int(P_V(\spt\mu)\cap P_V(\spt\nu))\neq \emptyset$ for positively many $V$.

\end{proof}

The above arguments also yield

\begin{thm}\label{int-proj1}
Suppose $2m<s<n$ and let $\mu\in\mathcal M(\Rn)$ with $I_{s}(\mu)<\infty$. Then 
$$\gamma_{n,m}\left(\{V\in G(n,m): 0\in \Int(P_V(\spt\mu))\}\right)>0.$$
\end{thm}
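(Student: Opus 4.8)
The plan is to follow the template of the proof of Theorem~\ref{int-proj2}, but working with the single measure $\mu$ and evaluating the projected density at the origin. Since $s > 2m$, the computation already carried out in the proof of Theorem~\ref{int-proj2} shows that $\widehat{P_{V\sharp}\mu} \in L^1(V)$ for $\gamma_{n,m}$ almost all $V \in G(n,m)$; for each such $V$ the density $P_{V\sharp}\mu$ is therefore continuous, and Fourier inversion together with the identity $\widehat{P_{V\sharp}\mu}(v) = \widehat{\mu}(v)$ for $v \in V$ (established in the proof of Theorem~\ref{int-proj}) gives
$$P_{V\sharp}\mu(0) = \int_V \widehat{P_{V\sharp}\mu}(v)\, d\mathcal H^m v = \int_V \widehat{\mu}(v)\, d\mathcal H^m v.$$

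First I would integrate this identity over the Grassmannian. The estimate from the proof of Theorem~\ref{int-proj2}, combined with the trivial bound $|\widehat{\mu}| \le \mu(\Rn)$ near the origin, shows that $\int_{\Rn} |x|^{m-n}|\widehat{\mu}(x)|\, dx < \infty$, so formula (\ref{be5}) applies to $f = \widehat{\mu}$ and yields
$$\int_{G(n,m)} P_{V\sharp}\mu(0)\, d\gamma_{n,m}V = c(n,m)\int_{\Rn} |x|^{m-n}\widehat{\mu}(x)\, dx.$$

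The crux is to show that the right-hand side is strictly positive. I would use the classical fact that the Fourier transform of the Riesz kernel $|x|^{m-n}$ on $\Rn$ is a \emph{positive} constant multiple of $|x|^{-m}$; writing $\widehat{\mu}(x) = \int e^{-2\pi i x \cdot y}\, d\mu y$ and interchanging the order of integration (which mirrors the passage to mutual energy in (\ref{mutual}) and the proof of Theorem~\ref{int-proj}, formally the case $\nu = \delta_0$) converts the integral into $c'(n,m)\int_{\Rn} |y|^{-m}\, d\mu y$. This last integral is finite, because $I_s(\mu) < \infty$ with $s > m$ forces $\mu(B(0,r)) \lesssim r^s$ and hence $\int_{B(0,1)} |y|^{-m}\, d\mu y \lesssim \int_0^1 r^{s-m-1}\, dr < \infty$, while the integral over the complement of $B(0,1)$ is bounded by $\mu(\Rn)$; and it is strictly positive since $\mu$ is a nonzero positive measure.

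Finally, since $P_{V\sharp}\mu(0) \ge 0$ and its integral over $G(n,m)$ is positive, we must have $P_{V\sharp}\mu(0) > 0$ for $V$ in a set of positive $\gamma_{n,m}$ measure. For almost every such $V$ the density $P_{V\sharp}\mu$ is continuous, so positivity at the origin persists on a neighbourhood of $0$ in $V$; as $\{P_{V\sharp}\mu > 0\} \subseteq \spt(P_{V\sharp}\mu) \subseteq P_V(\spt\mu)$, this gives $0 \in \Int(P_V(\spt\mu))$ for positively many $V$, as desired. I expect the main obstacle to be the rigorous justification of the positivity step — specifically, validating the Riesz-kernel Fourier identity and the Fubini interchange for the measure $\mu$ rather than a nice function, since the kernel $|x|^{m-n}$ is only a tempered distribution and the relevant ``mutual energy with $\delta_0$'' falls outside the finite-energy setting in which (\ref{mutual}) was established.
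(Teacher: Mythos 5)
Your proposal is correct and is essentially the paper's own proof: the same $L^{1}$/continuity step for $\widehat{P_{V\sharp}\mu}$, the same integration over $G(n,m)$ via (\ref{be5}), and the same reduction of positivity to the Riesz kernel duality --- the step you flag as the main obstacle is handled in the paper by citing (12.10) in \cite{Mat1}, which states that $|x|^{m-n}\widehat{\mu}(x)$ is a constant multiple of $\widehat{k_{m}\ast\mu}(x)$ with $k_{m}(x)=|x|^{-m}$, so that Fourier inversion yields $\int P_{V\sharp}\mu(0)\,d\gamma_{n,m}V=c'(n,m)\,k_{m}\ast\mu(0)>0$. One minor caveat: your side-claim that $I_{s}(\mu)<\infty$ forces $\mu(B(0,r))\lesssim r^{s}$ is false (finite energy gives no Frostman bound at a prescribed point), but this is harmless, since only the strict positivity of $\int|y|^{-m}\,d\mu y$ is actually needed for the argument, and that is automatic for a nonzero positive measure.
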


\begin{proof}
For almost all $V\in G(n,m)$ we again have that $\widehat{P_{V\sharp}\mu}\in L^1(V)$ and $P_{V\sharp}\mu$ is continuous. Then by the Fourier inversion formula
$$P_{V\sharp}\mu(0)=\int_V\widehat{P_{V\sharp}\mu}(v)\, d\mathcal H^mv.$$
Arguing as above,
\begin{align*}
&\int P_{V\sharp}\mu(0)\,d\gamma_{n,m}V=\iint_V\widehat{P_{V\sharp}\mu}(v)\, d\mathcal H^mv\,d\gamma_{n,m}V\\
&=\iint_V\widehat{\mu}(v)\, d\mathcal H^mv\,d\gamma_{n,m}V=c(n,m)\int_{\Rn} |x|^{m-n}\widehat{\mu}(x) \, dx.
\end{align*}
Letting $k_m$ be the Riesz kernel, $k_m(x)=|x|^{-m}$, the integrand $|x|^{m-n}\widehat{\mu}(x)$ is a constant multiple of the Fourier transform of $k_m\ast\mu$, see, for example (12.10) in \cite{Mat1}. Since, as above, $\int_{\Rn} |x|^{m-n}|\widehat{\mu}(x)| \, dx<\infty$ by Schwarz's inequality and the condition $I_s(\mu)<\infty$, we have again by the Fourier inversion formula,
$$\int P_{V\sharp}\mu(0)\,d\gamma_{n,m}V=c'(n,m)k_m\ast\mu(0)>0$$
As $P_{V\sharp}\mu$ is continuous for $\gamma_{n,m}$ almost all $V\in G(n,m)$, the theorem follows.
\end{proof}

\begin{proof}[Proof of Theorem \ref{int-proj0}(iii)]
Let $m<s<\dim A, 0<t<\dim B, s+t>2m$  and let $\mu\in\mathcal M(A), \nu\in\mathcal M(B)$ with $I_{s}(\mu)<\infty$ and $I_{t}(\nu)<\infty$. The proof of part (iii) of Theorem \ref{int-proj0} is again based on the identity (\ref{mutual1}). We shall apply it to a standard convolution approximation $\mu_{\delta}, \delta > 0,$ of $\mu$, in place of $\mu; \mu_{\delta}(x)=\psi_{\delta}\ast\mu(x), \psi_{\delta}(x)=\delta^{-n}\psi(x/\delta)$ where $\psi$ is a smooth non-negative function with support in $B(0,1)$ and with $\int\psi=1$. Then, using also (\ref{mutual3}), (\ref{mutual1}) takes the form
\begin{equation*}
\int\int_V P_{V\sharp}\mu_{\delta}(v)\, d P_{V\sharp}\nu v\,d\gamma_{n,m}V   = c(n,m)\int_{\Rn} |x|^{m-n}\widehat{\mu_{\delta}}(x) \overline{\widehat{\nu}(x)}\, dx.
\end{equation*} 
Since $\widehat{\mu_{\delta}}(x)=\widehat{\psi_{\delta}}(x)\widehat{\mu}(x)$ and $\widehat{\psi_{\delta}}(x)=\widehat{\psi}(\delta x)\to\widehat{\psi}(0)=1$ as $\delta\to 0$, we see that the right hand side tends to the positive and finite number $c(n,m)\int_{\Rn} |x|^{m-n}\widehat{\mu}(x) \overline{\widehat{\nu}(x)}\, dx=c'(n,m)I_{m}(\mu,\nu)$.  Hence there are $0<c<C<\infty$ such that for all $0<\delta<1$, 
\begin{equation}\label{mutual2}
c<\int\int_V P_{V\sharp}\mu_{\delta}(v)\, d P_{V\sharp}\nu v\,d\gamma_{n,m}V < C.
\end{equation} 

We have $P_{V\sharp}\mu_{\delta}=\psi^V_{\delta}\ast P_{V\sharp}\mu$ where $\psi^V(v)=\int_{V^{\perp}}\psi(v+w)d\mathcal H^{n-m}w$ for $v\in V$. Using again the identity $\widehat{P_{V\sharp}\mu}(v)=\widehat{\mu}(v)$ for $v\in V$ and the formula (\ref{be5}), we find that 
$$\int_{G(n,m)} \int_V|v|^{s-m}|\widehat{P_{V\sharp}\mu}(v)|^2\,d \mathcal H^{m}vd\gamma_{n,m}V=c(n,m)\int|x|^{s-n}|\widehat{\mu}(x)|^2\,d x<\infty.$$
This yields that for $\gamma_{n,m}$ almost all $V\in G(n,m)$, $P_{V\sharp}\mu$ is a function in the fractional Sobolev space $H^{(s-m)/2}(V)$, see, for example, \cite{Mat2}, Section 17.1. Consider the maximal function
$$M_Vf(v)=\sup_{\delta>0}|\psi^V_{\delta}\ast f(v)|, \qquad v\in V.$$ 
Since $m-2((s-m)/2)=2m-s<t$ and $I_t(P_{V\sharp}\nu)<\infty$ for $\gamma_{n,m}$ almost all $V\in G(n,m)$, we conclude for such $V$ from Theorem 17.3 in \cite{Mat2} (or rather its proof) that $\psi^V_{\delta}\ast P_{V\sharp}\mu$ converges $P_{V\sharp}\nu$ almost everywhere to $P_{V\sharp}\mu$ and $M_V(P_{V\sharp}\mu)\in L^1(P_{V\sharp}\nu)$. Hence by the dominated convergence theorem, $\psi^V_{\delta}\ast P_{V\sharp}\mu$ converges to $f_V:=P_{V\sharp}\mu|\spt P_{V\sharp}\nu$ in $L^1(P_{V\sharp}\nu)$.

Then by (\ref{mutual2}) for $\gamma_{n,m}$ positively many $V, f_V$ is positive and finite in a set of positive $P_{V\sharp}\nu$ measure. Thus with a large enough constant $C_V$ the measure $\chi_{\{x:f_V(x)\leq C_V\}}f_VP_{V\sharp}\nu$ is a nontrivial measure with finite $t$-energy and with support contained in $P_V(\spt\mu)\cap P_V(\spt\nu)$. Part (iii) of Theorem \ref{int-proj0} follows from this.
\end{proof}

\section{Sharpness of the projection theorem}\label{exampleSection}

This section contains the details of Example \ref{mainEx}. Recall the statement:

\begin{example}\label{int-proj3}
Let $s$ and $t$ be positive numbers such that $s+t = 2$ and $0 < t < 1 < s < 2$. Then, there exist compact sets $A,B\subset\R^2$ with $\dim A = s$ and $\dim B = t$ such that $P_L(A)\cap P_L(B)=\emptyset$ for $\gamma_{2,1}$ almost all $L\in G(2,1)$. 
\end{example}

\begin{proof} For a start, let us assume that we have located compact sets $A_{1},B \subset \R$ and $A_{2} \subset (0,\infty)$ such that $\dim (A_{1} \times A_{2}) = s$ and $\dim B = t$, and $\mathcal{H}^{1}(A_{1} + BA_{2}) = 0$. Of course, we will eventually indicate how such sets can be constructed, but before that, we explain how their existence implies the desired example. 

Write $C := A_{1} + BA_{2}$, and let $\pi_{b}$, $b \in B$, be the projection $\pi_{b}(x,y) := (x,y) \cdot (1,b) = x + by$. Then $\pi_{b}(A_{1} \times A_{2}) \subset C$ for all $b \in B$. Assume that $(x,y) \in A_{1} \times A_{2}$, $\pi_{b}(x,y) = c \in C$, and write $(x,y) = t(-b,1) + (c',0)$ for some $t,c' \in \R$. Then
\begin{displaymath} c = x + by = (-tb + c') + bt = c'. \end{displaymath}
So, the assumption $\pi_{b}(A_{1} \times A_{2}) \subset C$, $b \in B$, can be rewritten in the following way: 
\begin{equation}\label{form10} A_{1} \times A_{2} \subset \bigcap_{b \in B} \bigcup_{c \in C} \ell(-b,c), \end{equation} 
where $\ell(b,c)$ is the line $\ell(b,c) = \spa(b,1) + (c,0)$. Since $A_{2} \subset (0,\infty)$, the inclusion continues to hold, if we replace $\ell(-b,0)$ by $\ell(-b,0) \setminus \{(c,0)\}$. 

To make further progress, we employ the projective transformation
\begin{displaymath} F(x,y) := \frac{(x,1)}{y}, \qquad y \neq 0. \end{displaymath} 
The mapping $F$ has the useful property that it maps any line $\ell'(b,e) := (b,0) + \spa(e)$, with $b \in \R$ and $e = (e_{1},e_{2}) \in S^{1}$, $e_{2} \neq 0$, to the line $\ell(b,e_{1}/e_{2})$, as defined above. To be precise,
\begin{displaymath} F(\ell'(b,e) \setminus \{(b,0)\}) = \ell(b,e_{1}/e_{2}) \setminus \{(e_{1}/e_{2},0)\}. \end{displaymath} Indeed, if $e_{2} \neq 0$, any point $x \in \ell'(b,e) \setminus \{(b,0)\}$ can be written as 
\begin{displaymath} x = (b,0) + \frac{e}{te_{2}} = \left(b + \frac{e_{1}}{te_{2}},\frac{1}{t} \right), \quad t \in \R \setminus \{0\}, \end{displaymath}
and then
\begin{displaymath} F(x) = \frac{(b + e_{1}/(te_{2}),1)}{1/t} = (tb + e_{1}/e_{2},t) = t(b,1) + (e_{1}/e_{2},0) \in \ell(b,e_{1}/e_{2}). \end{displaymath}
It is also clear that any point in $\ell(b,e_{1}/e_{2}) \setminus \{(e_{1}/e_{2},0)\}$ can be obtained in this way.

 So, from \eqref{form10} and the sentence right under it, we infer that
\begin{displaymath} A := F^{-1}(A_{1} \times A_{2}) \subset \bigcap_{b \in B} \bigcup_{e \in E} \ell'(-b,e) \setminus \{(-b,0)\} = \bigcap_{b \in -B} \bigcup_{e \in E} \ell'(b,e) \setminus (-B \times \{0\}), \end{displaymath}
where $E \subset S^{1}$ is the set of those vectors $e = (e_{1},e_{2})$ such that $e_{1}/e_{2} \in C$. Note that $A$ is compact with $\dim A = \dim (A_{1} \times A_{1}) = s$ (rather: $A$ can be made compact by ensuring that $A_{1} \times A_{2}$ does not lie close to the singularities of $F^{-1}$, which is no problem later on). Also, since $C$ is a null-set, we have $\mathcal{H}^{1}(E) = 0$. Now, we claim that $P_{L}(-B \times \{0\}) \cap P_{L}(A) = \emptyset$, whenever $L \in G(2,1) \setminus \{e^{\perp} : e \in E\}$. This proves the proposition, since $\gamma_{2,1}(\{e^{\perp} : e \in E\}) = 0$. To prove the claim, assume that $P_{L}(A) \cap P_{L}(-B \times \{0\}) \neq \emptyset$. Thus, 
\begin{displaymath} P_{L}(a - (b,0)) = P_{L}(a) - P_{L}(b,0) = 0 \end{displaymath}
for some $a \in A$ and $(b,0) \in -B \times \{0\}$ with $a \neq (b,0)$, which is another way of saying that the difference $a - (b,0)$ lies in the orthogonal complement of $L$. But, since $a \in \ell'(b,e)$ for some $e \in E$, and the difference $a - (b,0)$ is parallel to $e$, this means that $e \in L^{\perp}$, and hence $L = e^{\perp}$. 

We have now reduced matters to establishing the claim formulated in the first paragraph of the proof, namely finding $A_{1},A_{2},B$ such that $\mathcal{H}^{1}(A_{1} + BA_{2}) = 0$. We will indicate the idea and leave the standard details for the reader. Let $r := 1/s \in (1/2,1)$, and define 
\begin{displaymath} A'_{1} := A_{1}'(n) := \left\{\frac{k}{n^{r}} : 1 \leq k \leq n^{r} \right\}, \quad A_{2}' := A_{2}'(n) := \left\{\frac{k}{n^{1 - r}} : 1 \leq k \leq n^{1 - r} \right\}, \end{displaymath}
and
\begin{displaymath} B' := B'(n) := \left\{\frac{k}{n^{2r - 1}} : 1 \leq k \leq n^{2r - 1}\right\}. \end{displaymath}
The main observation is that
\begin{displaymath} A_{2}'B' = \left\{ \frac{jk}{n^{r}} : 1 \leq j \leq n^{1 - r} \text{ and } 1 \leq k \leq n^{2r - 1} \right\} \subset \left\{ \frac{k}{n^{r}} : 1 \leq k \leq n^{r} \right\},  \end{displaymath}
which implies that
\begin{displaymath} A_{1}' + A_{2}'B' \subset \left\{\frac{k}{n^{r}} : 2 \leq k \leq 2n^{r}\right\}. \end{displaymath}
Consequently, if $r < r'$, and $A_{i} = A_{i}(n,r')$ and $B = B(n,r')$ are defined as the $n^{-r'}$-neighbourhoods of $A_{i}'$ and $B'$, respectively, then $A_{1} + A_{2}B$ can be covered by intervals of length $\sim n^{-r'}$ centred at the $\leq 2n^{r}$ points in $A_{1}' + A_{2}'B$. In particular, $\mathcal{H}^{1}(A_{1} + A_{2}B) \lesssim n^{r - r'} \to 0$, as $n \to \infty$. We next calculate the "dimensions" of the sets $A_{1}$,$A_{2}$ and $B$ (this is a bit vague on purpose). Since $A_{1}$ consists of $n^{r}$ well-separated intervals of length $n^{-r'}$, the "dimension" of $A_{1}$ can be chosen arbitrarily close to one by choosing $r'$ close to $r$. Since $A_{2}$ consists of $n^{1 - r}$ well-separated intervals of length $n^{-r'}$ (here one needs that $n^{1 - r} < n^{r}$), the "dimension" of $A_{2}$ is $(1 - r)/r'$, which can be made arbitrarily close to $1/r - 1 = s - 1$ by choosing $r'$ close to $r$. Hence, the "dimension" of $A_{1} \times A_{2}$ can be made arbitrarily close to $1 + (s - 1) = s$, as required. Along the same lines, the "dimension" of $B$ can be made arbitrarily close to $2 - 1/r = 2 - s = t$.  

To make everything precise, one needs to perform a standard Cantor set construction, and the sets $A_{1}(n,r'),A_{2}(n,r')$ and $B(n,r')$ are only the first stage. One chooses a rapidly increasing sequence of integers $(n_{j})_{j \in \N}$, and a sequence positive reals $(r_{j}')_{j \in \N}$ with the properties that $r_{j}' > r$, and $r_{j} \searrow r$ as $j \to \infty$. Then, one repeatedly places a scaled copy of $A_{i}(n_{j + 1},r_{j + 1}')$ inside each constituent interval of $A_{i}(n_{j},r_{j}')$. If $A_{i}^{j}$ is the union of intervals obtained after $j$ iterations, one can arrange $\mathcal{H}^{1}(A_{1}^{j} + B(n_{j},r_{j})A_{2}^{j}) \leq 1/j$ by choosing the growth speed of the sequence $(n_{j})$ great enough. The sets $A_{1},A_{2}$ are the limit sets of this procedure, and then $\dim (A_{1} \times A_{2}) = s$, since $r_{j}' \to r$. Finally, by choosing the growth speed of the sequence $(n_{j})$ great enough, one can also ascertain that $\dim B = t$ with
\begin{displaymath} B := \bigcap_{j = 1}^{\infty} B(n_{j},r_{j}). \end{displaymath} 
Then $\mathcal{H}^{1}(A_{1} + BA_{2}) \leq \mathcal{H}^{1}(A_{1}^{j} + B(n_{j})A_{2}^{j}) \leq 1/j$ for all $j \in \N$, hence $\mathcal{H}^{1}(A_{1} + BA_{2}) = 0$. We leave a more detailed proof to the reader. To ensure that $A_{2} \subset (0,\infty)$, as required in the first part of the proof, one can replace $A_{2}$ by $A_{2} \cap [1/10,\infty)$ in the very end; this has no effect on the dimension.  \end{proof}

\section{Plane sections}\label{sectionSection}

We start with a proposition, which follows from Lemma 6.5 in \cite{Mat}, but we give here a slightly different simple proof.

\begin{pr}\label{sect-pr}
Let $A\subset\Rn$ be a Borel set with $\dim A>m$. Then for all $x\in\Rn$, $\dim A\cap(V+x)\leq\dim A-m$ for $\gamma_{n,n-m}$ almost all 
$V\in G(n,n-m)$. 
\end{pr}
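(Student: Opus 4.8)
The plan is to prove the upper bound for one exponent $u > \dim A - m$ at a time and then let $u$ decrease to $\dim A - m$. First I would make two harmless reductions. Translating by $x$ turns $A\cap(V+x)$ into $(A-x)\cap V$ with the same dimension, so I may assume $x=0$ and prove that $\dim(A\cap V)\le s-m$ for $\gamma_{n,n-m}$ almost all $V$, where $s:=\dim A>m$. Writing $A=\bigcup_N(A\cap B(0,N))$ and taking a supremum over $N$, I may also assume $A$ is bounded. Now fix $u>s-m$, so that $u+m>s=\dim A$ and hence $\mathcal H^{u+m}(A)=0$. It then suffices to show $\int_{G(n,n-m)}\mathcal H^u(A\cap V)\,d\gamma_{n,n-m}V=0$, since this forces $\mathcal H^u(A\cap V)=0$, and in particular $\dim(A\cap V)\le u$, for almost every $V$; intersecting the resulting null sets over a sequence $u_k\searrow s-m$ gives the proposition.

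The geometric heart of the argument is the estimate
$$\gamma_{n,n-m}(\{V\in G(n,n-m):\dist(a,V)\le r\})\lesssim (r/|a|)^m,\qquad 0<r<|a|/2,$$
which I would derive from \eqref{be5} applied with $n-m$ in place of $m$. Taking $f=\chi_{B(a,2r)}$ there, the inner integral $\int_V f\,d\mathcal H^{n-m}$ equals a constant times $(4r^2-\dist(a,V)^2)_+^{(n-m)/2}$, which is $\gtrsim r^{n-m}$ whenever $\dist(a,V)\le r$; the right-hand side of \eqref{be5} is $c\int_{B(a,2r)}|z|^{-m}\,dz\sim r^n|a|^{-m}$, because $|z|\sim|a|$ on $B(a,2r)$ when $r<|a|/2$. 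Dividing gives the claim. Note that the correct exponent is $m$, not $n-m$: the condition $\dist(a,V)\le r$ reads $|P_{V^\perp}a|\le r$ with $V^\perp\in G(n,m)$, which is exactly the event of measure $\sim(r/|a|)^m$.

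With this in hand I would run a covering argument, decomposing $A$ into the dyadic annuli $A_j:=A\cap\{2^{-j-1}\le|z|<2^{-j}\}$ (only finitely many negative $j$ occur since $A$ is bounded). Because $\dim A_j\le s<u+m$ gives $\mathcal H^{u+m}(A_j)=0$, for any prescribed $\eta_j>0$ and any $\delta>0$ there is a cover $A_j\subset\bigcup_i B(a_i^j,r_i^j)$ with $r_i^j<\min\{\delta,2^{-j-3}\}$ and $\sum_i(r_i^j)^{u+m}<\eta_j$. The bound $r_i^j<2^{-j-3}$ forces $|a_i^j|\sim 2^{-j}$ and $r_i^j<|a_i^j|/2$, so every ball sits in the ``far'' regime of the geometric estimate. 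Covering $A\cap V$ by those balls that meet $V$ then yields, for each such $\delta$,
$$\int_{G(n,n-m)}\mathcal H^u_\delta(A\cap V)\,d\gamma_{n,n-m}V\lesssim\sum_j\sum_i(r_i^j)^u\,(r_i^j/|a_i^j|)^m\sim\sum_j 2^{jm}\sum_i(r_i^j)^{u+m}<\sum_j 2^{jm}\eta_j.$$
Choosing $\eta_j:=\epsilon\,2^{-jm}2^{-|j|}$ makes the last sum $\lesssim\epsilon\sum_j 2^{-|j|}\lesssim\epsilon$, uniformly in $\delta$. Letting $\delta\to0$ (monotone convergence, as $\mathcal H^u_\delta\uparrow\mathcal H^u$) and then $\epsilon\to0$ gives $\int\mathcal H^u(A\cap V)\,d\gamma_{n,n-m}V=0$, as desired.

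The step I expect to be the main obstacle is controlling the contribution of $A$ near the slicing centre (here the origin). The naive bound $\sum_i(r_i^j)^u$ for balls essentially containing the origin is not summable, and it is precisely the decay $(r_i^j/|a_i^j|)^m\sim 2^{jm}(r_i^j)^m$ from the geometric estimate, combined with the freedom to take the annulus energies $\eta_j$ as small as we please, that tames it. Everything else uses only \eqref{be5} and elementary ball-slice volumes and is routine. (If $0\notin\overline A$ the annuli are unnecessary: a single cover with $\sum_i r_i^{u+m}$ small already works, since $|a_i|$ is then bounded below and the weight $|a_i|^{-m}$ is harmless.)
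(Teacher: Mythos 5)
Your proof is correct, but it takes a genuinely different route from the paper's. The paper argues via radial projections: for $n-m=1$ it applies the Eilenberg-type coarea inequality (Theorem 7.7 of \cite{Mat1}) to the map $\pi_0(z)=z/|z|$, which is locally Lipschitz with constant $1/\delta$ outside $B(0,\delta)$, lets $\delta\to 0$, and identifies fibres $\pi_0^{-1}\{y\}\cup\pi_0^{-1}\{-y\}$ with punctured lines; the general case then follows by disintegrating $\gamma_{n,n-m}$ over $W\in G(n,m+1)$ (uniqueness of the invariant measure) and running the same argument inside each $W$. You instead prove directly the quantitative statement $\int \mathcal H^u(A\cap V)\,d\gamma_{n,n-m}V=0$ for each $u>\dim A-m$, using the Frostman-type bound $\gamma_{n,n-m}(\{V:\dist(a,V)\le r\})\lesssim (r/|a|)^m$ (derived from \eqref{be5}) together with a dyadic-annulus covering; your weight $(r/|a|)^m$ and the choice $\eta_j=\epsilon 2^{-jm}2^{-|j|}$ play exactly the role that the blow-up $1/\delta$ of the Lipschitz constant and the limit $\delta_j\to 0$ play in the paper. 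Your approach is self-contained (no coarea inequality, no invariant-measure uniqueness, no splitting into the cases $n-m=1$ and $n-m>1$) and yields the stronger integral-geometric conclusion; the paper's is shorter given the cited machinery, since the Eilenberg inequality absorbs all the covering bookkeeping. Two points are glossed over but routine: first, your geometric estimate as stated degenerates as $r\uparrow |a|/2$, but after discarding balls that miss $A_j$ you have $|a_i^j|\ge 3\cdot 2^{-j-3}$ and $r_i^j<2^{-j-3}$, so you only invoke it in the uniform regime $r\le|a|/3$; second, $V\mapsto\mathcal H^u_\delta(A\cap V)$ is not obviously measurable, so one should integrate the dominating measurable function $\sum_i (r_i^j)^u\chi_{\{V:\,B(a_i^j,r_i^j)\cap V\ne\emptyset\}}$ and conclude via Chebyshev's inequality and continuity from below of the (regular) outer measure $\gamma_{n,n-m}$.
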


\begin{proof}
Translating $A$, we may assume that $x=0$. Suppose first that $n-m=1$. Let $\pi_0$ be the radial projection from the origin onto the unit sphere $\Sn$. Then $\pi_0$ is locally Lipschitz with Lipschitz constant $1/\delta$ in $\Rn\setminus B(0,\delta)$ for $\delta>0$. Let $t>\dim A$. Then $\mathcal H^t(A)=0$ and by Theorem 7.7 in \cite{Mat1},
$$\int_{\Sn}\mathcal H^{t-(n-1)}((A\setminus B(0,\delta)\cap\pi_0^{-1}\{y\})\,d\mathcal H^{n-1}y\leq C(m,t,\delta)\mathcal H^t(A)=0.$$
Applying this to a sequnce $\delta_j\to 0$, it follows that $\mathcal H^{t-(n-1)}(A\cap\pi_0^{-1}\{y\})=0$ for $\mathcal H^{n-1}$ almost all $y\in\Sn$. As $t>\dim A$ was arbitrary, we have $\dim A\cap\pi_0^{-1}\{y\}\leq \dim A - (n-1)$ for $\mathcal H^{n-1}$ almost all $y\in\Sn$. But this is exactly the desired statement, since the punctured lines $L\setminus\{0\}$ are of the form $\pi_0^{-1}\{y\}\cup\pi_0^{-1}\{-y\}, y\in\Sn$ with $L\cap\Sn=\{y, -y\}$, and the surface measure $\mathcal H^{n-1}$ on $\Sn$ and the measure $\gamma_{n,1}$ are related by
$$\gamma_{n,1}(G)=c(n)\mathcal H^{n-1}(\bigcup_{L\in G}L\cap\Sn).$$

Suppose then $n-m>1$. For $W\in G(n,m+1)$, let
$$G(W,1)=\{L\in G(n,1):L\subset W\},$$
and let $\gamma_{W,1}$ be the natural measure on $G(W,1)$. We can write the measure $\gamma_{n,n-m}$ as
\begin{equation}\label{gamma}
\gamma_{n,n-m}(G)=\int\gamma_{W,1}(\{L\in G(W,1):W^{\perp}+L\in G\})\,d\gamma_{n,m+1}W
\end{equation}
for Borel sets $G\subset G(n,n-m)$. This holds since the right hand side defines an orthogonally invariant Borel probability measure on $G(n,n-m)$ and such a measure is unique.

Fix $W\in G(n,m+1)$ for a while. Every $x\in\Rn$ can be written uniquely as $x=v+w, v\in W, w\in W^{\perp}$. The map $\pi_W:x\mapsto v/|v|$ from $\Rn\setminus W^{\perp}$ onto $W\cap\Sn$ is  locally Lipschitz with $\pi_W^{-1}\{y\}\cup\pi_W^{-1}\{-y\}=W^{\perp}+L$ when $L\in G(W,1)$ with $L\setminus\{0\}=\pi_W^{-1}\{y\}\cup\pi_W^{-1}\{-y\}\cap W$. The same argument as in the case $m=n-1$ gives that $\dim A\cap(W^{\perp}+L)\leq\dim A-m$ for $\gamma_{W,1}$ almost all $L\in G(W,1)$. Now the proposition follows using (\ref{gamma}).
\end{proof}

\begin{thm}\label{sect-thm}
Let $m<s\leq n$ and let  $A\subset\Rn$ be $\mathcal H^s$ measurable with $0<\mathcal H^s(A)<\infty$. Then there is a Borel set $B\subset\Rn$ with $\dim B\leq m$ and with the following property: for every $x\in\Rn\setminus B$,
$$\gamma_{n,n-m}(\{V\in G(n,n-m):\dim A\cap(V+x)=s-m\})>0.$$ 
\end{thm}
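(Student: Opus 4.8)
The plan is to transfer the slicing problem into a statement about intersections of projections, and then contradict Theorem \ref{int-proj}. Throughout I identify $G(n,n-m)$ with $G(n,m)$ via $V\mapsto V^{\perp}$ (a $\gamma$-measure preserving bijection) and use the elementary identity $A\cap(V^{\perp}+x)=A\cap P_V^{-1}\{P_V(x)\}$, valid for $V\in G(n,m)$: the $(n-m)$-plane section of $A$ through $x$ is exactly the fibre of $P_V|_A$ over the point $P_V(x)\in V$. For $V\in G(n,m)$ define the set of \emph{good positions}
\[ \mathcal{G}_V := \{\, a\in V : \dim[A\cap P_V^{-1}\{a\}] = s-m \,\}. \]
With this notation the exceptional set is $B=\{\,x\in\Rn : \gamma_{n,m}(\{V:P_V(x)\in\mathcal{G}_V\})=0\,\}$, and the goal is $\dim B\leq m$.

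First I record the key positive fact about $\mathcal{G}_V$. Passing to a compact subset $A_0\subset A$ with $0<\mathcal H^s(A_0)<\infty$ carrying the Frostman bound $\mathcal H^s|_{A_0}(B(x,r))\lesssim r^s$, put $\mu:=\mathcal H^s|_{A_0}\in\mathcal M(A)$; since $m<s$ this gives $I_m(\mu)<\infty$. The Marstrand--Mattila section theorem \cite{Mar, Mat} applied to $A_0$ yields $\dim[A_0\cap(V^{\perp}+x)]=s-m$ for $\mu\times\gamma_{n,m}$ almost every $(x,V)$. Since the section depends on $x$ only through $a=P_V(x)$, and since the reverse inequality $\dim[A\cap P_V^{-1}\{a\}]\leq s-m$ holds for almost every $V$ by Proposition \ref{sect-pr} (using $\dim A=s$), a Fubini argument gives, for $\gamma_{n,m}$ almost every $V$,
\[ P_{V\sharp}\mu(V\setminus\mathcal{G}_V)=0, \]
i.e. the $L^2$ density $P_{V\sharp}\mu$ is concentrated on $\mathcal{G}_V$.

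Now suppose, for contradiction, that $\dim B>m$. By Frostman's lemma there is $\nu\in\mathcal M(B)$ with $I_m(\nu)<\infty$. By the definition of $B$, each $x\in B$ satisfies $\gamma_{n,m}(\{V:P_V(x)\in\mathcal{G}_V\})=0$; integrating this in $x$ against $\nu$ and applying Fubini gives, for $\gamma_{n,m}$ almost every $V$,
\[ P_{V\sharp}\nu(\mathcal{G}_V)=\nu(\{x:P_V(x)\in\mathcal{G}_V\})=0, \]
so $P_{V\sharp}\nu$ is concentrated on $V\setminus\mathcal{G}_V$. Since $I_m(\mu),I_m(\nu)<\infty$, both $P_{V\sharp}\mu$ and $P_{V\sharp}\nu$ lie in $L^2(V)$ for almost every $V$; being non-negative densities concentrated on the complementary sets $\mathcal{G}_V$ and $V\setminus\mathcal{G}_V$, their product vanishes $\mathcal H^m$-almost everywhere, whence $\int_V P_{V\sharp}\mu\,P_{V\sharp}\nu\,d\mathcal H^m=0$ for almost every $V$. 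Integrating over $G(n,m)$ then contradicts the strict positivity of $\iint_V P_{V\sharp}\mu\,P_{V\sharp}\nu\,d\mathcal H^m\,d\gamma_{n,m}V$ guaranteed by Theorem \ref{int-proj}. Therefore $\dim B\leq m$.

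The step I expect to be the \emph{main obstacle} is measurability: to run the two Fubini arguments and to apply Frostman's lemma to $B$, I need the map $(V,a)\mapsto\dim[A\cap P_V^{-1}\{a\}]$ (equivalently $(V,x)\mapsto\mathbf{1}[P_V(x)\in\mathcal{G}_V]$) to be jointly measurable, and hence $\mathcal{G}_V$ and $B$ to be Borel (or at least analytic, which suffices for Frostman). This is a standard but non-trivial fact about the Borel measurability of the Hausdorff dimension of slices; I would either cite it or verify it by expressing the dimension of a compact section through dyadic covering numbers of the sections, which vary upper semicontinuously in the Hausdorff metric. Everything else is bookkeeping of almost-everywhere statements.
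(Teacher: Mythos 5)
Your proof is correct, and its skeleton is the same as the paper's: reduce to the lower bound via Proposition \ref{sect-pr}, put a Frostman measure $\mu=\mathcal H^s|_{A_0}$ with $I_m(\mu)<\infty$ on a compact subset of $A$, suppose the exceptional set $B$ satisfies $\dim B>m$, take $\nu\in\mathcal M(B)$ with $I_m(\nu)<\infty$, and play Mattila's section theorem (plus Fubini) against Theorem \ref{int-proj}. Where you genuinely diverge is the endgame, and your version is a streamlining. The paper uses the positivity in Theorem \ref{int-proj} to produce a set $C_V$ of positive $\mathcal H^m$ measure on which both projected densities are positive, and then invokes the disintegration formulas (\ref{disint}) and (\ref{disint1}) to extract actual points $x,y$ lying in a common fibre, with $\dim A\cap(V+x)\geq s-m$ and $\dim A\cap(V+y)<s-m$, a contradiction. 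You instead exploit the observation that the ``good'' and ``bad'' sets are saturated unions of fibres of $P_V$, so the full-measure statements for $\mu$ and $\nu$ translate directly into the statement that the $L^2$ densities $P_{V\sharp}\mu$ and $P_{V\sharp}\nu$ vanish a.e.\ off $\mathcal G_V$ and off $V\setminus\mathcal G_V$ respectively; hence $\int_V P_{V\sharp}\mu\,P_{V\sharp}\nu\,d\mathcal H^m=0$ for a.e.\ $V$, contradicting the positivity of the double integral. This runs the contradiction in the opposite direction and makes the sliced-measure machinery of Section 2 unnecessary, which is a real (if modest) simplification. One correction to your closing remark on measurability: the fact you need --- Borel measurability of $(V,a)\mapsto\dim\bigl[A\cap P_V^{-1}\{a\}\bigr]$ for compact $A$ --- is exactly what the paper quotes from Mattila--Mauldin \cite{MM}, and citing it is the clean route; your proposed self-contained verification via ``dyadic covering numbers'' of sections would control box dimension rather than Hausdorff dimension. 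The correct elementary argument goes through the Hausdorff content $\mathcal H^t_\infty$, which is upper semicontinuous under Hausdorff convergence of compact sets and satisfies $\dim K=\sup\{t:\mathcal H^t_\infty(K)>0\}$, combined with the outer semicontinuity of the sections in $(V,a)$.
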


\begin{proof}
Due to Proposition \ref{sect-pr}, we only need to prove the lower bound $\dim A\cap(V+x)\geq s-m$. We may assume that $A$ is compact and $\mathcal H^s(A)<\infty$ since, by the Borel regularity of Hausdorff measures, $A$ contains a compact set with positive and finite measure. Then the function $(x,V)\mapsto \dim A\cap(V+x), x\in\Rn, V\in G(n,n-m),$ is a Borel function. This is rather easy to see, or one can consult \cite{MM}. Denoting by $\mu$ the restriction of $\mathcal H^s$ to $A$ we may also assume that $I_m(\mu)<\infty$; for this it suffices that $\mu(B(x,r))\leq Cr^s$ for all balls $B(x,r)$ and this is achieved applying the upper density estimate Theorem 6.2(1) in \cite{Mat1} and restricting $\mu$ to a further subset.

Suppose that the assertion of the theorem fails. Then there is a Borel set $B\subset\Rn$ such that  $\dim B > m$ and for $x\in B$,~ $\dim A\cap(V+x)<s-m$ for $\gamma_{n,n-m}$ almost all $V\in G(n,n-m)$. Then we can find $\nu\in\mathcal M(B)$  such that $I_m(\nu)<\infty$. Now we have by Theorem 10.10 in \cite{Mat1} for $\mu$ almost all $x\in\Rn$, $\dim A\cap(V+x)\geq s-m$, and by the definition of $B$ for $\nu$ almost all $y\in\Rn$, $\dim A\cap(V+y)<s-m$, both for $\gamma_{n,n-m}$ almost all $V\in G(n,n-m)$. By Fubini's theorem, for $\gamma_{n,n-m}$ almost all $V\in G(n,n-m),~ \dim A\cap(V+x)\geq s-m$ for $\mu$ almost all $x\in\Rn$ and $\dim A\cap(V+y)<s-m$ for $\nu$ almost all $y\in\Rn$. We get a contradiction if we find such $x$ and $y$ for which $V+x=V+y$, that is, $P_{V^{\perp}}x=P_{V^{\perp}}y$.

Let $V\in G(n,n-m)$ be such that both $P_{V^{\perp}\sharp}\mu$ and $P_{V^{\perp}\sharp}\nu$ are absolutely continuous with respect to $\mathcal H^{m}$, which by \cite{Mat1}, Theorem 9.7, is true for almost all $V$. Define the Borel sets 
$$A_V=\{x\in\Rn: \dim A\cap(V+x)\geq s-m\},$$
$$B_V=\{y\in\Rn: \dim A\cap(V+y)< s-m\},$$
$$C_V=\{a\in V^{\perp}: P_{V^{\perp}\sharp}\mu(a)P_{V^{\perp}\sharp}\nu(a)>0\}.$$
Using the above properties of $\mu$ and $\nu$, we can find such a $V$ such that $A_V$ has full $\mu$ measure and $B_V$ has full $\nu$ measure, and moreover due to Theorem \ref{int-proj}, $\mathcal H^m(C_V)>0$. By (\ref{disint}),
$$\int_{V^{\perp}}\mu_{V^{\perp},a}(\Rn\setminus A_V)\,d\mathcal H^ma=\mu(\Rn\setminus A_V)=0,$$
and
$$\int_V\nu_{V^{\perp},a}(\Rn\setminus B_V)\,d\mathcal H^ma=\nu(\Rn\setminus B_V)=0.$$
Since by (\ref{disint1}) $\mu_{V^{\perp},a}(\Rn)=P_{V^{\perp}\sharp}\mu(a)$ and $\nu_{V^{\perp},a}(\Rn)=P_{V^{\perp}\sharp}\nu(a)$, it follows that 
\begin{displaymath} \mu_{V^{\perp},a}(A_V)>0 \quad \text{and} \quad \nu_{V^{\perp},a}(B_V)>0 \end{displaymath}
for $\mathcal H^m$ almost all $a\in C_V$. Hence there is a common $a\in C_V$ for which these measures are positive, yielding $x\in A_V$ and $y\in B_V$ with $P_{V^{\perp}\sharp}x=P_{V^{\perp}}y=a$ as desired.

\end{proof}

Using the existence of subsets of positive and finite Hausdorff measure, see, for example, \cite{Mat1}, Theorem 8.13, we get from Theorem \ref{sect-thm}

\begin{cor}
Let  $A\subset\Rn$ be a Borel set with $\dim A>s>m$. Then there is a Borel set $B\subset\Rn$ with $\dim B\leq m$ and with the following property: for every $x\in\Rn\setminus B$,
$$\gamma_{n,n-m}(\{V\in G(n,n-m):\dim A\cap(V+x)\geq s-m\})>0.$$ 
\end{cor}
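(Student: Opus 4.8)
The plan is to deduce this corollary from Theorem~\ref{sect-thm} by passing to a subset of $A$ of positive and finite $\mathcal{H}^s$ measure. Since $\dim A > s$, we have $\mathcal{H}^s(A) = \infty$, and in particular $\mathcal{H}^s(A) > 0$. The subset theorem, Theorem 8.13 in \cite{Mat1}, then supplies a compact set $A' \subset A$ with $0 < \mathcal{H}^s(A') < \infty$. The hypotheses $m < s$ and $s \leq n$ needed to invoke Theorem~\ref{sect-thm} are met here, since $m < s < \dim A \leq n$.

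First I would apply Theorem~\ref{sect-thm} to the compact set $A'$. Being Borel, $A'$ is $\mathcal{H}^s$ measurable, so the theorem yields a Borel set $B \subset \Rn$ with $\dim B \leq m$ such that for every $x \in \Rn \setminus B$,
$$\gamma_{n,n-m}(\{V \in G(n,n-m) : \dim A' \cap (V+x) = s - m\}) > 0.$$
This $B$ will be the exceptional set claimed in the corollary, and no further exceptional set needs to be added.

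The final step uses only the monotonicity of Hausdorff dimension under inclusion. Since $A' \subset A$, for every $V \in G(n,n-m)$ and every $x \in \Rn$ we have $A' \cap (V+x) \subset A \cap (V+x)$, and hence $\dim A \cap (V+x) \geq \dim A' \cap (V+x)$. It follows that
$$\{V : \dim A' \cap (V+x) = s - m\} \subset \{V : \dim A \cap (V+x) \geq s - m\},$$
so for each $x \in \Rn \setminus B$ the right-hand set also has positive $\gamma_{n,n-m}$ measure, which is exactly the assertion.

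I do not expect any serious obstacle: all the substance resides in Theorem~\ref{sect-thm}, and the only points demanding care are verifying that the subset theorem applies (which requires $\mathcal{H}^s(A) > 0$, guaranteed by the hypothesis $\dim A > s$) and observing that weakening the equality $\dim A' \cap (V+x) = s-m$ to the inequality $\dim A \cap (V+x) \geq s-m$ is harmless thanks to the inclusion $A' \subset A$. The mildly delicate part is simply making sure the hypotheses line up, namely that $s$ sits strictly between $m$ and $\dim A$ so that both the subset theorem and Theorem~\ref{sect-thm} are available.
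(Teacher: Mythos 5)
Your proposal is correct and is exactly the paper's argument: the authors also obtain the corollary by extracting a compact subset $A' \subset A$ with $0 < \mathcal{H}^s(A') < \infty$ via Theorem 8.13 of \cite{Mat1}, applying Theorem \ref{sect-thm} to $A'$, and concluding by monotonicity of dimension under the inclusion $A' \cap (V+x) \subset A \cap (V+x)$. The hypothesis checks you flag (that $m < s \leq n$ and that $\dim A > s$ forces $\mathcal{H}^s(A) = \infty$, so the subset theorem applies) are precisely the points the paper leaves implicit.
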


This is false with $s=\dim A$: consider a countable union of compact sets $C_j, j=1,2,\dots,$ with $\dim C_j=s-1/j$ and diam$(C_j)$ tending to $0$.






\vspace{1cm}
\begin{footnotesize}
{\sc Department of Mathematics and Statistics,
P.O. Box 68,  FI-00014 University of Helsinki, Finland,}\\
\emph{E-mail addresses:} 
\verb"pertti.mattila@helsinki.fi", 
\verb"tuomas.orponen@helsinki.fi"

\end{footnotesize}

\end{document}